\documentclass[a4paper,12pt,oneside,dvipsnames,reqno]{amsart} 

\usepackage{amsmath,amsfonts,amsthm,amssymb,mathtools}

\usepackage{xcolor}
\usepackage{graphicx}
\usepackage{cite}

\usepackage{caption}
\usepackage{subcaption}

\usepackage{bbm}
\usepackage{tikz-cd}

\usepackage{hyperref}
\hypersetup{colorlinks=true, citecolor=PineGreen, linkcolor=RoyalBlue, linktoc=page,urlcolor=RoyalBlue, 
}

\usepackage{geometry}

\theoremstyle{plain}
\newtheorem{theorem}{Theorem}[section] 
\newtheorem{lemma}[theorem]{Lemma}
\newtheorem{prop}[theorem]{Proposition}
\newtheorem{cor}[theorem]{Corollary}

\newtheorem{conj}{Conjecture}[section]

\theoremstyle{remark}
\newtheorem{rem}[theorem]{Remark}

\theoremstyle{definition}

\newtheorem{example}[theorem]{Example}

\newcommand{\Hb}{\mathbb{H}}

\newcommand{\GL}{\operatorname{GL}}

\newcommand{\n}{\operatorname{n}}

\renewcommand{\Re}{\operatorname{Re}}
\renewcommand{\Im}{\operatorname{Im}}

\newcommand{\diag}{\operatorname{diag}}

\newcommand{\arsinh}{\operatorname{arsinh}}

\title{Spectral fourth moments of Hecke--Maa\ss{} cusp forms}

\author{Edgar Assing}
\address{Mathematical Institute of the University of Bonn, Endenicher Allee 60, D–53115 Bonn, Germany}
\email{\href{mailto:assing@math.uni-bonn.de}{assing@math.uni-bonn.de}}

\author{Peter Humphries}
\address{Department of Mathematics, University of Virginia, Charlottesville, VA 22904, USA}
\email{\href{mailto:pclhumphries@gmail.com}{pclhumphries@gmail.com}}
\urladdr{\href{https://sites.google.com/view/peterhumphries/}{https://sites.google.com/view/peterhumphries/}}
\date{\today}

\begin{document}

\begin{abstract}
In this note, we establish essentially optimal bounds for certain spectral moments of automorphic forms for $\mathrm{GL}(2)$. More precisely, we consider the family of Hecke--Maa\ss{} cusp forms with spectral parameter in a dyadic interval and study the fourth moment of these forms evaluated at a Heegner point. We additionally present applications of our main result to the shifted convolution problem involving the sum of two squares function $r(n)$ as well as to pointwise Diophantine exponents.
\end{abstract}

\maketitle

\section{Introduction}

\subsection{The main result}

Let $q\in \mathbb{N}$ be fixed and let $\Gamma_0(q)\subseteq \mathrm{SL}_2(\mathbb{Z})$ denote the Hecke congruence subgroup. We recall the spectral decomposition of $L^2(\Gamma_0(q)\backslash \mathbb{H})$ with respect to the Laplace--Beltrami operator $\Delta = -y^2(\frac{\partial^2}{\partial x^2}+\frac{\partial^2}{\partial y^2})$; see, for example, \cite{Iw}. Our main focus lies on $L_{\mathrm{cusp}}^2(\Gamma_0(q)\backslash \mathbb{H})$, the cuspidal part of this space. We write $\mathcal{B}(q)$ for an orthonormal basis of $L_{\mathrm{cusp}}^2(\Gamma_0(q)\backslash \mathbb{H})$ that consists of eigenfunctions of $\Delta$. Given $\varphi\in \mathcal{B}(q)$, we write $\lambda_{\varphi} = \frac{1}{4}+t_{\varphi}^2$ for the corresponding $\Delta$ eigenvalue, where $t_{\varphi} \in [0,\infty) \cup i(0,\frac{1}{2})$ denotes the associated spectral parameter. We also assume that the elements $\varphi\in \mathcal{B}(q)$ are eigenfunctions of all Hecke operators $T_m$ with $(m,q)=1$ and we write $\lambda_{\varphi}(m)$ for the corresponding Hecke eigenvalues (i.e.\ $T_m\varphi = \lambda_{\varphi}(m)\cdot \varphi$). Elements of such a basis $\mathcal{B}(q)$ are called Hecke--Maa\ss{} cusp forms.

We are interested in estimating the spectral fourth moment
\begin{equation}
	\sum_{\substack{\varphi\in \mathcal{B}(q)\\ T\leq t_{\varphi} \leq  2T}} \vert \varphi(z_0)\vert^4 \label{eq:moment}
\end{equation}
as $T\to \infty$ with both $q \in \mathbb{N}$ and $z_0\in \Gamma_0(q)\backslash \mathbb{H}$ fixed. Recall that by the local Weyl law,
\[\sum_{\substack{\varphi\in \mathcal{B}(q)\\ T\leq t_{\varphi} \leq  2T}} \vert \varphi(z_0)\vert^2 \ll_{q,z_0} T^2.\]
See \cite[(7.10)]{Iw} for a precise statement that applies to the setting at hand. Furthermore, we have the local bound
\[\vert \varphi(z_0)\vert \ll_{q,z_0} (1+|t_{\varphi}|)^{\frac{1}{2}}\]
as stated in \cite[(13.9)]{Iw}. Thus, the trivial bound for \eqref{eq:moment} is
\[\sum_{\substack{\varphi\in \mathcal{B}(q)\\ T\leq t_{\varphi} \leq  2T}} \vert \varphi(z_0)\vert^4 \ll_{q,z_0} T^3.\]
This is far from the best possible. Indeed, a (by now) folklore conjecture, stated in \cite[(13.2)]{Iw}, predicts that for all $\epsilon>0$,
\[\vert \varphi(z_0)\vert \ll_{q,z_0,\epsilon} (1+|t_{\varphi}|)^{\epsilon}\]
for all $\varphi\in \mathcal{B}(q)$ and. In view of this together with the Weyl law
\[\sharp \{ \varphi\in \mathcal{B}(q)\colon T\leq t_{\varphi} \leq 2T\} \asymp_{q} T^2\]
as stated in \cite[(11.5)]{Iw} the following conjecture of Chamizo can be seen as a \textit{Lindel\"of on average} bound for \eqref{eq:moment}.

\begin{conj}[{\cite[Conjecture~1.1]{Ch}}]\label{conj:1.1}
For $q\in \mathbb{N}$, $z_0\in \Gamma_0(q)\backslash \mathbb{H}$, and $T\geq 1$, we have that
\begin{equation}
	\sum_{\substack{\varphi\in \mathcal{B}(q)\\ T\leq t_{\varphi} \leq  2T}} \vert \varphi(z_0)\vert^4 \ll_{q,x_o,\epsilon} T^{2+\epsilon}.\label{eq:4th}
\end{equation}
\end{conj} 

\begin{rem}
Note that our focus here is purely on the spectral aspect in the sense that we grow the family $\{ \varphi\in \mathcal{B}(q)\colon T\leq t_{\varphi} \leq 2T\}$ by taking $T\to \infty$ while keeping $q$ fixed. Reversing the roles of $T$ and $q$ leads to the so-called level aspect. In this setting, a related fourth moment has been successfully studied in the series of papers \cite{S1, S2} using theta functions. Unfortunately, this approach has so far not seen success in the spectral aspect. Interestingly, Steiner has reduced Conjecture~\ref{conj:1.1} (and more) to an intricate matrix counting problem \cite[Theorem~3]{S3}. To the best of our knowledge, this counting problem has resisted all attempts to prove it.	
\end{rem}

Since currently we are unable to resolve Conjecture~\ref{conj:1.1} for general $z_0\in \Gamma_0(q)\backslash\mathbb{H}$, we restrict our attention to so-called Heegner points associated to negative fundamental discriminants $D < 0$; see Section~\ref{sec:Heegner} below for a precise definition. Our main result is the following.

\begin{theorem}\label{th:4th_heegner}
Let $q\in \mathbb{N}$ and $T\ge 1$. For a Heegner point $z_0\in\Gamma_0(q)\backslash \mathbb{H}$ of discriminant $D$ and level $q$, we have that for all $\epsilon > 0$,
\[\sum_{\substack{\varphi\in \mathcal{B}(q)\\ T\leq t_{\varphi} \leq  2T}} \vert \varphi(z_0)\vert^4 \ll_{q,D,\epsilon} T^{2+\epsilon}.\]
\end{theorem}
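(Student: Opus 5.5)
The plan is to convert the pointwise fourth moment into a moment of $L$-functions, using Waldspurger's formula (in the explicit form of Zhang or Popa) at the Heegner point. Since $z_0$ is a Heegner point of discriminant $D$, the value $\varphi(z_0)$ is one summand of a toric period. Summing over the class group $\mathrm{Cl}(D)$ with a class group character $\chi$ gives
\[\Bigl|\sum_{z\in \Lambda_D} \chi(z)\varphi(z)\Bigr|^2 \asymp_{q,D} \frac{L(\tfrac12,\varphi\otimes\theta_\chi)}{L(1,\mathrm{ad}\,\varphi)},\]
up to local factors at primes dividing $qD$ and at infinity. The archimedean factor is bounded above uniformly in $t_\varphi$. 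Here $\theta_\chi$ is the weight one theta series attached to $\chi$, and $L(s,\varphi\otimes\theta_\chi)$ is a $\mathrm{GL}(2)\times\mathrm{GL}(2)$ Rankin--Selberg $L$-function of analytic conductor $\asymp_{q,D} T^4$. By orthogonality of characters on the finite group $\mathrm{Cl}(D)$,
\[|\varphi(z_0)|^2 \ll_D \sum_{\chi} \Bigl|\sum_{z}\chi(z)\varphi(z)\Bigr|^2 \ll_{q,D} \sum_\chi \frac{L(\tfrac12,\varphi\otimes\theta_\chi)}{L(1,\mathrm{ad}\,\varphi)}.\]
We use the lower bound $L(1,\mathrm{ad}\,\varphi)\gg T^{-\epsilon}$ of Hoffstein--Lockhart. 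For oldforms and non-newform basis elements we pass to newforms of level dividing $q$, which costs only $O_q(1)$.

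Squaring and using $h(D)=O_D(1)$ reduces Theorem~\ref{th:4th_heegner} to
\[\sum_{\substack{\varphi \\ T\le t_\varphi\le 2T}} L(\tfrac12,\varphi\otimes\theta_\chi)^2 \ll T^{2+\epsilon}\]
for each fixed $\chi$. When $\chi$ is genus (real), $\theta_\chi$ is an Eisenstein series, and $L(\tfrac12,\varphi\otimes\theta_\chi)=L(\tfrac12,\varphi\otimes\chi_{d_1})L(\tfrac12,\varphi\otimes\chi_{d_2})$. The required bound is then a fourth moment of $\mathrm{GL}(2)$ $L$-functions (twisted by fixed quadratic characters) over the dyadic family of size $T^2$, with each $L$-function of conductor $T^2$. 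This is the spectral large sieve situation. Approximate functional equations express each $L$-value as Dirichlet polynomials of length about $T^{1+\epsilon}$. By Cauchy--Schwarz it suffices to bound $\sum_\varphi |\sum_{n\le N} a_n\lambda_\varphi(n)|^2$ for $N\approx T^2$ after multiplying out two factors. The Deshouillers--Iwaniec spectral large sieve gives $(T^2+N)\|a\|^2 T^\epsilon$, and with $N\asymp T^2$ and divisor-bounded coefficients this yields $T^{2+\epsilon}$. (Equivalently, one can apply the Kuznetsov formula directly to the fourth moment, as in Motohashi and Ivić.)

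The main obstacle is the non-genus characters. There $\theta_\chi$ is cuspidal, the $L$-function $L(s,\varphi\otimes\theta_\chi)$ has conductor $T^4$ and Dirichlet coefficients $\lambda_\varphi(n)r_\chi(n)$, and the second moment requires Dirichlet polynomials of length $T^{2+\epsilon}$. The squared polynomial has length $T^4$, so the large sieve by itself gives $T^{4}$, which is too weak. I would first try the following route. Use the approximate functional equation only once, writing $L^2$ as a single Dirichlet series with coefficients $\sum_{n_1n_2=n}\lambda_\varphi(n_1)\lambda_\varphi(n_2)r_\chi(n_1)r_\chi(n_2)$. Apply Hecke multiplicativity to reduce to $\sum_\varphi h(t_\varphi)\lambda_\varphi(m)\lambda_\varphi(n)$ and then to the Kuznetsov formula with a test function $h$ localizing to $[T,2T]$. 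The diagonal contributes $T^{2+\epsilon}$ because $\sum_{n\le T^4} |r_\chi(n)|^2\tau(n)/n \ll T^\epsilon$. The off-diagonal Kloosterman terms, after Voronoi summation in the theta coefficients (equivalently, a shifted convolution sum of $r_\chi(n)r_\chi(n+h)$, which is where the $r(n)$ application mentioned in the abstract enters), should be controlled by the Weil bound together with the stationary phase of the Bessel transform at $c\ll T^{\epsilon}\sqrt{mn}/T^2$. If this stalls, the fallback is to bound the non-genus sum by the same large sieve argument applied to the $\mathrm{GL}(2)\times\mathrm{GL}(2)$ coefficients, using the extra spectral averaging to gain the missing $T^2$. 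Verifying that this gain is available is the crux of the whole argument.
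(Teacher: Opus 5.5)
There is a genuine gap, and it sits in the case you yourself call the crux. For non-genus $\chi$ you never prove $\sum_{T\le t_\varphi\le 2T} L(\tfrac12,\varphi\otimes\theta_\chi)^2\ll T^{2+\epsilon}$. You only sketch a Kuznetsov/Voronoi/shifted-convolution route whose off-diagonal terms ``should be controlled'', together with a fallback whose feasibility you leave open.

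Moreover, your reason for abandoning the large sieve is a miscounting. To bound the second moment you never multiply $|P_\varphi|^2$ out into a polynomial of length $T^4$. The large sieve bounds $\sum_\varphi |P_\varphi|^2$ directly in terms of the length of $P_\varphi$ itself. Applying it to a length-$T^4$ polynomial would be bounding the fourth moment of the $L$-values, which is not what you need. (Also, the $r(n)$ application in the paper is a consequence of the theorem, not an ingredient of its proof.)

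Here is how the cuspidal case goes. Use an approximate functional equation whose smooth weight is independent of $\varphi$; the paper's Lemma~\ref{lm:approx_fe} does this at the cost of an admissible $O(T^{\epsilon})$ error, or you can separate variables by Mellin inversion. This gives
\[L(\tfrac12,\varphi\otimes\theta_\chi)\ll T^{\epsilon}\max_{N\le T^{2+\epsilon}}N^{-1/2}\Bigl|\sum_n\lambda_\varphi(n)r_\chi(n)W(n/N)\Bigr|+T^{\epsilon}.\]
Lemma~\ref{lm:spec_larg} then gives
\[\frac{1}{N}\sum_{\varphi}\frac{1}{L(1,\mathrm{ad}\,\varphi)}\Bigl|\sum_n \lambda_\varphi(n)r_\chi(n)W(n/N)\Bigr|^2\ll \frac{(T^2+N)N^{1+\epsilon}}{N}\ll T^{2+\epsilon},\]
since $r_\chi(n)\ll n^{\epsilon}$ and $N\le T^{2+\epsilon}$. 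The point is that the square root of the conductor, $T^2$, equals the size of the family, so the large sieve is already sharp.

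This is exactly the computation you carried out in the genus case, where two length-$T$ polynomials multiply to one of length $T^2$. The cuspidal case is no harder, and the paper treats all $\xi\in\widehat{\mathrm{Cl}_D}$ uniformly in this way. With this replacement, the rest of your outline (Waldspurger, orthogonality over $\mathrm{Cl}_D$, Cauchy--Schwarz, $L(1,\mathrm{ad})\gg T^{-\epsilon}$, passing to newforms) coincides with the paper's proof.
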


\subsection{Application to correlated sums of \texorpdfstring{$r(n)$}{r(n)}}

For $n\in \mathbb{N}$, we let $r(n)$ denote the number of representations of $n$ as the sum of two squares. For $N,m \in \mathbb{N}$, we consider the correlation
\[S(N,m) = \sum_{n\leq N} r(n)r(n+m).\]
It is well known that we can write 
\[S(N,m) = \frac{8c_m}{m}\cdot N+E(N,m) \quad \text{for }c_m = \sum_{d\mid m}(-1)^{m+d}d,\]
where the error term satisfies $E(N,m)=o_m(N)$. A problem of interest is improving this error estimate and making it uniform in the shift $m$. This was considered by Estermann in \cite{Es}, who showed that
\[E(N,m)\ll_{m,\epsilon} N^{\frac{11}{12}+\epsilon}.\]
A milestone was achieved by Iwaniec \cite[Theorem~12.5]{Iw}, who improved the error term estimate to
\[E(N,m) \ll_{\epsilon} m^{\frac{1}{3}+\epsilon}N^{\frac{2}{3}} \quad \text{for }1\leq m\leq N \text{ odd.}\]
Chamizo removed the assumption that $m$ is odd and showed in \cite{Ch} that
\begin{equation}
    E(N,m) \ll_{\epsilon}N^{\frac{2}{3}}+m^{\frac{1}{3}+\epsilon}N^{\frac{1}{3}}
\label{eq:ram_pet}
\end{equation}
under the assumption of the Ramanujan--Petersson conjecture.\footnote{We have implicitly assumed that the Hecke operators $T_m$ are normalised such that $\lambda_{\varphi}(m)\ll_{\epsilon} m^{\frac{1}{2}+\epsilon}$ is the trivial bound. The Ramanujan--Petersson conjecture then predicts that $\lambda_{\varphi}(m)\ll_{\epsilon}m^{\epsilon}$.} The currently best unconditional results can be found in \cite[Corollary~1.5]{Ch2} and read
\begin{equation}
E(N,m) \ll_{\epsilon} m^{\epsilon}\cdot \begin{dcases*}
		N^{\frac{2}{3}} & if $m^{\frac{3}{2} + 3\theta}\leq N$, \\
		m^{\frac{1 + 2\theta}{4}}N^{\frac{1}{2}} & if $m^{\frac{3}{2} - \theta}\leq N <  m^{\frac{3}{2} + 3\theta}$,\\
		m^{\frac{2\theta}{3}}N^{\frac{2}{3}} & if $m^{\frac{46\theta}{5}}\leq N < m^{\frac{3}{2} - \theta}$,\\
		N^{\frac{17}{23}} & if $m\leq N < m^{\frac{46\theta}{5}}$, \\
		(mN)^{\frac{17}{46}} & if $m^{\frac{61 + 92\theta}{77}} \leq  N < m$, \\
		m^{\frac{13 + 4\theta}{28}}N^{\frac{1}{4}} & if $m^{\frac{11 - 44\theta}{7}} \leq N < m^{\frac{61 + 92\theta}{77}}$, \\
		m^{\frac{1 + 2\theta}{3}}N^{\frac{1}{3}} & if $N <  m^{\frac{11 - 44\theta}{7}}$,
	\end{dcases*} \label{eq:uncond_chami}
\end{equation}
where $\theta\geq 0$ is the best possible bound towards the Ramanujan--Petersson conjecture.\footnote{More precisely, $\theta\geq 0$ is the smallest exponent for which $\lambda_{\varphi}(m) \ll_{\epsilon} m^{\theta+\epsilon}$ holds unconditionally for all $m\in \mathbb{N}$. The current record, due to Kim and Sarnak \cite[Appendix~2]{Kim}, is $\theta=\frac{7}{64}$.} We refer to Figure~\ref{fig} for a visualization of this complicated looking bound.

As a consequence of our main results, we can make \cite[Theorem~3.1]{Ch} (see also \cite[Theorem~4.1]{Ch2}) unconditional, achieving the following estimate for the error term $E(N,m)$.

\begin{theorem}\label{main_th1}
For all $\epsilon >0$, we have
\begin{equation}
E(N,m) \ll_{\epsilon} m^{\epsilon}\cdot \begin{dcases*}
		N^{\frac{2}{3}} & if $m^{\frac{3}{4} + 3\theta}\leq N$, \\
		m^{\frac{1 + 4\theta}{8}}N^{\frac{1}{2}} & if $m \leq N <  m^{\frac{3}{4} + 3\theta}$,\\
		m^{\frac{3 + 4\theta}{8}}N^{\frac{1}{4}} & if $m^{\frac{1}{2} + 2\theta}\leq N < m$,\\
		  m^{\frac{1}{4}}N^{\frac{1}{2}} & if $m^{\frac{1}{2}} \leq N < m^{\frac{1}{2} + 2\theta}$, \\
		  m^{\frac{1}{3}}N^{\frac{1}{3}} & if $N <  m^{\frac{1}{2}}$.\
	\end{dcases*} \label{eq:cond_on_fourth}
\end{equation}
Furthermore, we have the mean square estimate
\[\sum_{M<m\leq 2M} \vert E(N,m)\vert^2 \ll_{\epsilon} N^{\epsilon}(N^{\frac{4}{2}}M^{\frac{1}{3}} + M^4),\]
for $1 < M^2 < N$.
\end{theorem}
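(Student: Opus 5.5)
The plan is to follow Chamizo's reduction in \cite[Theorem~3.1]{Ch} (see also \cite[Theorem~4.1]{Ch2}). There the only input that needs the Ramanujan--Petersson conjecture, or Conjecture~\ref{conj:1.1}, is a bound for a spectral fourth moment at the specific Heegner point $z_0=\frac{1+i}{2}$ (or $i$) on $\Gamma_0(4)$. That point comes from the theta function $\theta(z)^2=\sum r(n)e(nz)$, and Theorem~\ref{th:4th_heegner} now supplies the bound unconditionally.

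\textbf{Step 1: spectral expansion.} Write $S(N,m)$ via the shifted convolution of $|\theta|^4 y$ (weight $0$, level $4$). Smooth the sharp cutoff $n\le N$ at scale $\Delta$, which costs $O(N^{1+\epsilon}/\Delta)$ by a trivial divisor-type bound. Then expand the Poincaré-type series spectrally. The continuous spectrum and the constant term give the main term $\frac{8c_m}{m}N$ plus acceptable errors. The cuspidal contribution is a sum over $\varphi\in\mathcal{B}(4)$ of $\langle |\theta|^4y,\varphi\rangle\,\rho_\varphi(m)\,h(t_\varphi)$, where $h$ is a transform of the smooth weight. This transform decays past $t\approx N/\Delta$ and is of size roughly $N^{1/2}t^{-3/2}$ in the relevant range, as in Chamizo.

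\textbf{Step 2: the key inner products.} Since $\theta^2$ is essentially a weight-one Eisenstein series attached to $\mathbb{Q}(i)$, unfolding or the Hecke--Waldspurger relation expresses $\langle|\theta|^4 y,\varphi\rangle$ through values at Heegner points of discriminant $-4$. Chamizo's route bounds the dyadic sums
\[\sum_{T\le t_\varphi\le 2T}|\langle |\theta|^4y,\varphi\rangle|^2 e^{\pi t_\varphi}\]
through $\sum_{T\le t_\varphi\le2T}|\varphi(z_0)|^4$. Theorem~\ref{th:4th_heegner} with $q=4$ and $D=-4$ gives $T^{2+\epsilon}$ here, exactly the conditional input of \cite{Ch}.

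\textbf{Step 3: Cauchy--Schwarz and optimisation.} On each dyadic $t$-range, apply Cauchy--Schwarz. One factor is the fourth-moment sum above. The other is $\sum |\rho_\varphi(m)|^2e^{-\pi t}$, which is bounded by the Kuznetsov/large sieve estimate $\ll (T^2+m^{1/2}\cdot\ldots)m^\epsilon$. Where the large sieve is weak, use instead the individual bound $|\lambda_\varphi(m)|\ll m^{\theta+\epsilon}$ together with the local Weyl law; this produces the $\theta$-dependence. Summing dyadically up to $N/\Delta$ and balancing against $N/\Delta$ yields the five ranges of \eqref{eq:cond_on_fourth}. For the mean square over $M<m\le2M$, keep the Cauchy--Schwarz structure, but apply the spectral large sieve $\sum_{m\sim M}|\sum_\varphi a_\varphi\rho_\varphi(m)|^2\ll (T^2+M)\sum|a_\varphi|^2$ directly to the $m$-sum. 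Theorem~\ref{th:4th_heegner} again controls $\sum|a_\varphi|^2$; then choose $\Delta$ optimally.

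\textbf{Main obstacle.} The hard part is Step 2: checking that the pairing $\langle|\theta|^4y,\varphi\rangle$ is controlled precisely by the Heegner-point fourth moment at the right scale, including the oldforms on $\Gamma_0(4)$ and the factor $e^{\pi t}$ from Stirling. Beyond that, the remaining work is the bookkeeping in Step 3 that produces the exact exponent breakpoints, which I would take from Chamizo with the conditional hypothesis replaced by Theorem~\ref{th:4th_heegner}.
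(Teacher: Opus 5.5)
\textbf{There is a genuine gap: the theorem you invoke does not apply at level $4$.} Your Step~2 uses Theorem~\ref{th:4th_heegner} with $q=4$ and $D=-4$, but there are no Heegner points of discriminant $-4$ and level $4$. The definition in Section~\ref{sec:Heegner} requires $p$ to split in $K$ whenever $p^2\mid q$, and $2$ ramifies in $\mathbb{Q}(i)$. Concretely, a form $[A,B,C]$ with $4\mid A$ and $B^2-4AC=-4$ would force $B^2\equiv 12 \pmod{16}$, which is impossible. So neither $\frac{1+i}{2}$ nor $i$ qualifies on $\Gamma_0(4)$. Nothing in the paper bounds $\sum_{\varphi\in\mathcal{B}(4)}|\varphi(z_0)|^4$ there; in particular the level-$4$ newforms are uncontrolled.

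Separately, your claim that $\sum|\langle y|\theta|^4,\varphi\rangle|^2e^{\pi t_\varphi}$ is dominated by a fourth moment of point values is not an argument from \cite{Ch}; you flag it yourself as the unresolved step. Proving it would need Watson--Ichino and Waldspurger, with explicit local computations at $2$ and $\infty$ for every $\varphi\in\mathcal{B}(4)$, oldforms included.

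\textbf{What the paper does instead.} Its proof is a pure reduction with the correct inputs. Inspecting \cite[Theorem~3.1, Corollary~4.2]{Ch} and \cite[Theorem~4.1]{Ch2} shows that Conjecture~\ref{conj:1.1} is used only for $(q,z_0)=(1,i)$ and $(q,z_0)=(2,\frac{-1+i}{2})$. In that argument the point values enter directly, with no triple products. $S(N,m)$ is tied to counting integral matrices of determinant $m$ with $u(\gamma z_0,z_0)\le\delta$. The pretrace formula for $T_m$ (compare Lemma~\ref{lem:pretrace}) turns this into $\sum_\varphi\lambda_\varphi(m)|\varphi(z_0)|^2h(t_\varphi)$. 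Cauchy--Schwarz then separates $\sum|\lambda_\varphi(m)|^2$ from $\sum_{T\le t_\varphi\le 2T}|\varphi(z_0)|^4$.

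Both points are Heegner points of discriminant $-4$. Level $2$ is admissible because $2$ ramifies and $4\nmid 2$; take $r=2$ and $Q=[2,2,1]$, as in Example~\ref{example}. Hence Theorem~\ref{th:4th_heegner} supplies exactly Chamizo's hypothesis, and both the pointwise and the mean-square bounds follow.

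If you prefer to start from $\theta$, note that $y|\theta|^4$ is also invariant under $z\mapsto -1/(4z)$. So it really lives on a group conjugate to $\Gamma_0(2)$, not genuinely on $\Gamma_0(4)$. Even on your route the relevant level is $2$, although Step~2 would still need a proof.
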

\begin{proof}
The pointwise bound was stated in \cite[Theorem~3.1]{Ch} and \cite[Theorem~4.1]{Ch2} conditional on Conjecture~\ref{conj:1.1}. Similarly, the mean square estimate appears in  \cite[Corollary~4.2]{Ch} also conditional on Conjecture~\ref{conj:1.1}. Inspecting the proofs shows that they do not require Conjecture~\ref{conj:1.1} in its full strength. Indeed, it suffices to prove \eqref{eq:4th} in two situations: first, for $q=1$ and $z_0=i$, and second, for $q=2$ and $z_0=\frac{-1+i}{2}$. Both of these points $z_0$ are Heegner points of discriminant $D=-4$, so that the desired results follow from Theorem~\ref{main_th1}.
\end{proof}

\begin{figure}
	\centering
	\includegraphics[width=0.95\linewidth]{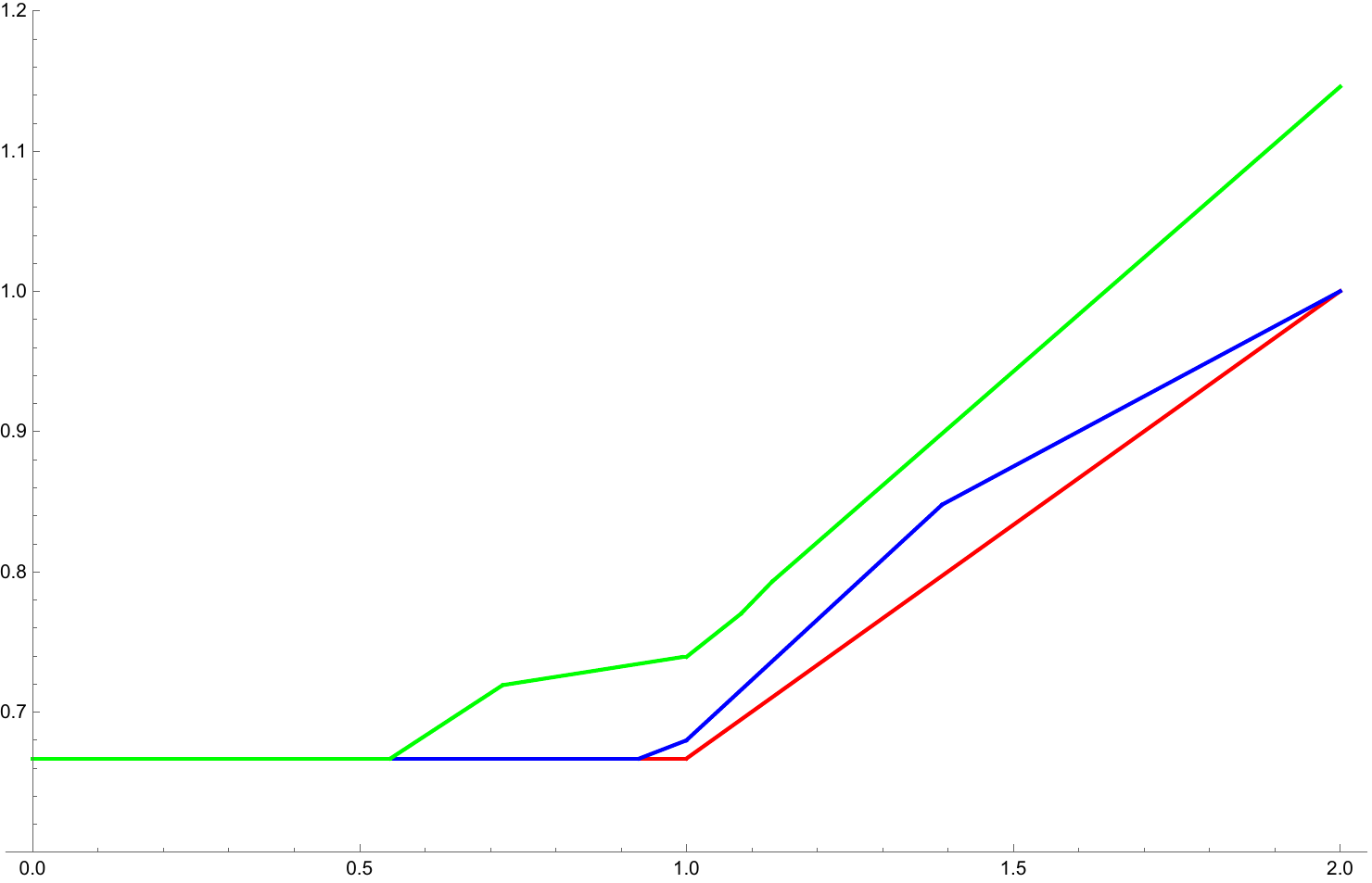}
	\caption{We compare different exponents $\beta(\alpha)$ arising in the estimate $E(N,N^{\alpha})\ll_{\epsilon} N^{\beta(\alpha)+\epsilon}$, where $\alpha\in [0,2]$. The exponent in red is conditional on the Ramanujan--Petersson conjecture and originates from \eqref{eq:ram_pet}. We display the exponents arising from \eqref{eq:cond_on_fourth} in blue, which we establish unconditionally in Theorem~\ref{main_th1}. Finally, we show the exponents from \eqref{eq:uncond_chami} in green. In the latter cases, we have used the current record $\theta=\frac{7}{64}$.}
	\label{fig}
\end{figure}

\begin{rem}
Determining the asymptotic behaviour of $S(N,m)$ is a prototypical example of a shifted convolution problem, where one studies the shifted convolution sum $\sum_{n \leq N} a(n) b(n + m)$ for some chosen arithmetic functions $a,b : \mathbb{N} \to \mathbb{C}$. Of particular interest is the setting where both $a$ and $b$ arise as the Hecke eigenvalues of automorphic forms. For the shifted convolution sum $S(N,m)$, both $a$ and $b$ are the Hecke eigenvalues $r(n)$ of an Eisenstein series of weight $1$, level $4$, and nebentypus $\chi_{-4}$, the quadratic character modulo $4$. A related problem, which naturally shows up in connection with the fourth moment of the Riemann zeta function, is the binary additive divisor sum
\[
D(N,m) = \sum_{n\leq N}\tau(n) \tau(n+m)\]
involving the divisor function $\tau(\n)$. In this setting, the arithmetic functions $a$ and $b$ are both the divisor function $\tau(n)$, which are the Hecke eigenvalues of the derivative at $s = \frac{1}{2}$ of the Eisenstein series $E(z,s)$ of level $1$. This problem has been extensively studied, most notably by Motohashi \cite{Mo}. The currently best known bounds for the error term in this problem that are uniform in the shift $m$ coincide with the bounds established in Theorem~\ref{main_th1}. We refer to \cite{BF} and the references within for more discussion concerning the binary additive divisor problem.
\end{rem}

\begin{rem}
Analogous problems can be studied in the function field setting; see \cite[Corollary~1.4]{FLMS}.
\end{rem}

\begin{rem}
It is a well known feature that the error estimate for shifted convolution problems can be improved considerably when the sharp cut-off $n\leq N$ is replaced by a smooth one. In our context, a very nice result was recently obtained in \cite[Theorem~7.1]{Hulse}: they allow for exponential smoothing and obtain
\[\sum_{n = 1}^{\infty} r(n)r(n+m) e^{-n/N} = C_m\cdot N + O_{\epsilon}(N^{\frac{1}{2}+\epsilon}e^{\frac{m}{N}}m^{\theta}).\]
\end{rem}

\subsection{Application to pointwise Diophantine exponents}

Let $\rho : \mathbb{H} \times \mathbb{H} \to [0,\infty)$ denote the $\mathrm{SL}_2(\mathbb{R})$-invariant Riemannian metric on $\mathbb{H}$ given by
\[\rho(z,w) = 2 \arsinh \frac{|z - w|}{2\sqrt{\Im(z)\Im(w)}} = \log \frac{|z - \overline{w}| + |z - w|}{|z - \overline{w}| - |z - w|}.\]
Let $\mu$ denote the associated Riemannian volume form on $\mathbb{H}$, namely $d\mu(z) = y^{-2} \, dx \, dy$ for $z = x + iy \in \mathbb{H}$. Fix a prime $p$, and for $\gamma \in \mathrm{SL}_2(\mathbb{Z}[\frac{1}{p}])$, let
\[\mathrm{ht}(\gamma) = \min\left\{k \in \mathbb{N} : p^k\gamma \in \mathrm{Mat}_2(\mathbb{Z})\right\}.\]
We recall the following definitions from \cite{JK}:
\begin{itemize}
\item The pointwise Diophantine exponent $\kappa(z,w)$ of $(z,w) \in \mathbb{H} \times \mathbb{H}$ is
\begin{multline*}
\kappa(z,w) = \inf\left\{\zeta : \exists \varepsilon_0 > 0 \text{ s.t. } \forall \varepsilon \in (0,\varepsilon_0) \ \exists \gamma \in \mathrm{SL}_2\left(\mathbb{Z}\left[\frac{1}{p}\right]\right) \text{ s.t.} \right. \\
\left. \vphantom{\mathrm{SL}_2\left(\mathbb{Z}\left[\frac{1}{p}\right]\right)} \rho(\gamma^{-1} z,w) \leq \varepsilon, \ \mathrm{ht}(\gamma) \leq \zeta \log_p \frac{1}{\varepsilon}\right\}.
\end{multline*}
\item The pointwise Diophantine exponent $\kappa(z)$ of $z \in \mathbb{H}$ is
\[\kappa(z) = \inf\left\{\tau : \kappa(z,w) \leq \tau \text{ for $\mu$-a.e.\ } w \in \mathbb{H}\right\}.\]
\item The Diophantine exponent $\kappa$ of $\mathbb{H}$ is
\[\kappa = \inf\left\{\tau : \kappa(z) \leq \tau \text{ for $\mu$-a.e.\ } z \in \mathbb{H}\right\}.\]
\end{itemize}

In the latter two cases, we have the trivial lower bounds $\kappa(z),\kappa \geq 1$. In \cite{JK}, Jana and Kamber raise the question of whether these inequalities are sharp, so that $\kappa(z) = \kappa = 1$ for all $z \in \mathbb{H}$. Recently, Jana and Kamber established in \cite[Theorem~2]{JK} the latter, namely that $\kappa=1$. For the former, it was shown by Ghosh, Gorodnik, and Nevo \cite{GGN} that for all $z_0\in \mathbb{H}$, the upper bound $\kappa(z_0)\leq \frac{32}{25}$ holds unconditionally and the equality $\kappa(z_0) = 1$ holds under the assumption of the Ramanujan--Petersson conjecture; see \cite[Theorem~1]{JK}. As a corollary of our main result, we obtain the following partial result.

\begin{cor}
\label{cor:diophantine}
For a Heegner point $z_0\in\mathrm{SL}_2(\mathbb{Z})\backslash \mathbb{H})$, we have that $\kappa(z_0)=1$.
\end{cor}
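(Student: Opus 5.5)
Corollary~\ref{cor:diophantine} will follow by feeding Theorem~\ref{th:4th_heegner} (with $q=1$) into the criterion of Jana and Kamber. Their framework reduces the bound $\kappa(z_0)\le 1$ to a spectral estimate at $z_0$. For the Ramanujan--Petersson case they use the full strength of Ramanujan. On inspection, however, the argument only needs an averaged bound of the shape
\[
\sum_{\substack{\varphi\in\mathcal{B}(1)\\ T\le t_\varphi\le 2T}} |\varphi(z_0)|^2\,|\lambda_\varphi(p^k)|^2 \ll_{z_0,\epsilon} (Tp^{k})^{\epsilon}\, T^2
\]
for the relevant ranges of $T$ and $k$. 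It may also need its variant with the $p$-adic spherical functions of the Hecke operators on the tree. The first step is therefore to extract from \cite{JK} (and \cite{GGN}) the precise spectral input used to show $\kappa(z_0)=1$.

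The second step is to supply that input without Ramanujan. I would apply H\"older/Cauchy--Schwarz to separate the two factors:
\[
\sum |\varphi(z_0)|^2|\lambda_\varphi(p^k)|^2 \le \Big(\sum |\varphi(z_0)|^4\Big)^{1/2}\Big(\sum |\lambda_\varphi(p^k)|^4\Big)^{1/2}.
\]
Theorem~\ref{th:4th_heegner} bounds the first sum by $T^{2+\epsilon}$. For the second sum I would use the Hecke relations to write $|\lambda_\varphi(p^k)|^4$ as a combination of $\lambda_\varphi(p^{j})$ with $j\le 4k$. I would then apply the Kuznetsov formula, or the Weyl law with Hecke operators, in the unweighted, sharply cut-off form. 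This should give $\ll T^2 + T^{O(1)}p^{O(k)}$-type contributions, and these are acceptable in the range where $p^k$ is at most a small power of $T$, which is the range the counting argument uses. Alternatively, one can avoid fourth moments of Hecke eigenvalues entirely. Here one uses only the pretrace formula with the Hecke operator $T_{p^{2k}}$, integrated against the point-pair invariant, for the ``diagonal'' part, together with Theorem~\ref{th:4th_heegner} to control the exceptional forms where $|\lambda_\varphi(p^k)|$ is large. By Chebyshev, there are few such forms, and their contribution is bounded by Cauchy--Schwarz against the fourth moment.

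The final step is to redo the counting argument of \cite{JK}. Take a smoothed indicator of a $\varepsilon$-ball around $w$, spectrally expand the count of $\gamma$ of height $k$ with $\rho(\gamma^{-1}z_0,w)\le\varepsilon$, and show that for $\mu$-a.e.\ $w$ the main term dominates once $p^{k}\ge \varepsilon^{-1-\delta}$. The ``almost every $w$'' allows an $L^2$-in-$w$ bound, so the error term is exactly a second moment in $w$ of $\sum_\varphi \varphi(z_0)\overline{\varphi(w)}\lambda_\varphi(p^k)h(t_\varphi)$. By Parseval this becomes $\sum|\varphi(z_0)|^2|\lambda_\varphi(p^k)|^2|h(t_\varphi)|^2$, which is the quantity controlled in the second step. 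A Borel--Cantelli argument over dyadic $\varepsilon$ then gives $\kappa(z_0,w)\le 1+\delta$ almost surely. I expect the main obstacle to be the second step: matching the exact $p$-adic weights and the range of $(T,k)$ in \cite{JK} with what Theorem~\ref{th:4th_heegner} plus Hecke-eigenvalue moments deliver, without losing a power of $p^k$.
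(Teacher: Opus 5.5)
Your architecture matches the paper's first proof: reduce via \cite{JK} to a local density bound at $z_0$, then use Cauchy--Schwarz against Theorem~\ref{th:4th_heegner}. There is, however, a genuine gap in the step that carries the weight, namely the bound for $\sum_{T\le t_\varphi\le 2T}|\lambda_\varphi(p^k)|^4$.

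You claim that losses of shape $T^{O(1)}p^{O(k)}$ are harmless because only $p^k$ at most a small power of $T$ is needed. This is false, and it contradicts your own third step. An $\varepsilon$-ball forces $T\asymp\varepsilon^{-1}$, while you take $p^k\ge\varepsilon^{-1-\delta}$. So the relevant Hecke index is $m=p^{2k}\asymp T^{2+O(\delta)}$. The paper takes $k=\lfloor\log_p T\rfloor$ and needs $\sum_{t_\varphi\le T^{1+\epsilon}}|\lambda_\varphi(p^{2k})|^2|\varphi(z_0)|^2\ll p^{2k}T^{\epsilon}$.

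In this range the argument is exactly tight. A bound of size $T^{a}m^{b}$ for the twisted local sum closes the reduction only when $a+2b\le 2$. Any fixed power loss in $T$ or $m$ therefore yields merely $\kappa(z_0)\le 1+c$ with $c>0$.

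After Cauchy--Schwarz you need precisely $\sum_{T\le t_\varphi\le 2T}|\lambda_\varphi(m)|^4\ll (mT)^{\epsilon}(T^2+m)$, so that $T^{1+\epsilon}(T^2+m)^{1/2}\le T^{\epsilon}(T^2+m)$. The paper gets this as follows:
\begin{enumerate}
\item The Hecke relations reduce the fourth moment to $m^{\epsilon}\max_{\ell\mid m^2}\sum_{T\le t_\varphi\le 2T}|\lambda_\varphi(\ell)|^2$.
\item The Kuznetsov/large-sieve bound \eqref{eq:dens_stro}, namely $(\ell T)^{\epsilon}(T^2+\sqrt{\ell})$, is applied for $\ell$ as large as $m^2\asymp T^4$. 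It must hold with no power loss beyond $T^2+\ell^{1/2}$.
\end{enumerate}
Your suggested ``Weyl law with Hecke operators'' (the Selberg trace formula route) does not deliver this. The paper notes that \eqref{eq:dens_stro} is stronger than the density theorem proved that way.

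Your fallback (pretrace formula plus Chebyshev on forms with large $|\lambda_\varphi(p^k)|$) does not close the gap either. Counting those exceptional forms is itself the missing density estimate.

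The paper's second proof does use the pretrace formula, but differently. It bounds the full twisted moment $\sum_\varphi\lambda_\varphi(\ell)|\varphi(z_0)|^2e^{-t_\varphi^2/T^2}$, plus its Eisenstein counterpart, by $(\ell T)^\epsilon(T^2/\sqrt{\ell}+\sqrt{\ell})$. It then recovers $|\lambda_\varphi(m)|^2$ by positivity from $|\lambda_\varphi(m)|^2=\sum_{\ell\mid m}\lambda_\varphi(\ell^2)$. The essential input there is a Heegner-specific lattice count: $\sharp\{\gamma\in\Gamma_\ell: u(\gamma z_0,z_0)\le\delta\}\ll\ell^{\epsilon}$ for $\delta\le 1/\ell$, and $\ll(\ell\delta)^{1+\epsilon}$ beyond. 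This comes from simultaneously diagonalising the two quaternary forms into a positive definite binary form. The general-point count of Iwaniec--Sarnak is too weak for this, and Theorem~\ref{th:4th_heegner} is not used at all on that route.

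Finally, your Parseval step omits the continuous spectrum. It must be included, though it is harmless since $|\lambda(m,t)|\le\tau(m)$.
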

\begin{proof}
Following the argument from \cite[Section~8]{JK}, one reduces proving that $\kappa(z_0)=1$ to a local version of the density hypothesis displayed in \eqref{eq:enhan_density} below. The starting point is an application of \cite[Lemma~5.11 and Proposition~6.1]{JK} with $n=2$, $\beta=1$, $\epsilon=\frac{1}{T}$ sufficiently small, and $k = \lfloor\log_p(T)\rfloor$. This reduces the estimate $\kappa(z_0)\leq 1$ to the bound
\begin{equation}
    \sum_{\substack{\varphi\in \mathcal{B}(1) \\ t_{\varphi}\leq T^{1+\epsilon}}}\vert \lambda_{\varphi}(p^{2k})\vert^2\cdot \vert \varphi(z_0)\vert^2 + \int_{\vert t\vert\leq T^{1+\epsilon}} \left\vert E\left(z_0,\frac{1}{2}+it\right)\right\vert^2 \, dt \ll_{p,z_0,\epsilon} p^{2k}T^{\epsilon}.\label{eq:to_show_for_optexp}
\end{equation}
Here we note that, for $\textrm{PGL}_2$, the Eisenstein spectrum is everywhere tempered and hence poses no problem. Indeed, we can estimate the continuous contribution in \eqref{eq:to_show_for_optexp} using well known bounds for Eisenstein series; see, for example, \cite[Proposition~3]{purity}. The remaining task is to bound the cuspidal contribution in \eqref{eq:to_show_for_optexp}. This can easily be done using the following inequality:
\begin{equation}
	\sum_{\substack{\varphi\in \mathcal{B}(1) \\ T\leq t_{\varphi}\leq 2T}} \vert \lambda_{\varphi}(m)\vert^2\cdot \vert \varphi(z_0)\vert^2 \ll_{z_0,\epsilon}  (mT)^{\epsilon} (T^2+m). \label{eq:enhan_density}
\end{equation}

We now establish \eqref{eq:enhan_density} for Heegner points $z_0$. We begin by applying the Cauchy--Schwarz inequality and Theorem~\ref{th:4th_heegner} in order to obtain
\[\sum_{\substack{\varphi\in \mathcal{B}(1) \\ T\leq t_{\varphi}\leq 2T}} \vert \lambda_{\varphi}(m)\vert^2\cdot \vert \varphi(z_0)\vert^2 \ll_{z_0,\epsilon}  T^{1+\epsilon}\cdot \left(\sum_{\substack{\varphi\in \mathcal{B}(1) \\ T\leq t_{\varphi}\leq 2T}}\vert \lambda_{\varphi}(m)\vert^4\right)^{\frac{1}{2}}.\]
To handle the fourth moment of the Hecke eigenvalues, we recall the Hecke relation
\[\lambda_{\varphi}(m)\lambda_{\varphi}(n) = \sum_{d\mid (m,n)} \lambda_{\varphi}\left(\frac{nm}{d^2}\right).\]
We can thus estimate
\[\sum_{\substack{\varphi\in \mathcal{B}(1) \\ T\leq t_{\varphi}\leq 2T}}\vert \lambda_{\varphi}(m)\vert^4 \ll_{\epsilon} m^{\epsilon} \max_{\ell \mid m^2} \sum_{\substack{\varphi\in \mathcal{B}(1) \\ T\leq t_{\varphi}\leq 2T}}\vert \lambda_{\varphi}(\ell)\vert^2.\]
The remaining sum can be estimated using\footnote{In the context of the density hypothesis, versions of \eqref{eq:dens_stro} appear, for example, in \cite{BBR} and \cite{Hu}. Let us stress that the estimate stated in \eqref{eq:dens_stro} is actually stronger than the original density hypothesis as proven in \cite{Sa} using the Selberg trace formula.}
\begin{equation}
	\sum_{\substack{\varphi\in \mathcal{B}(q) \\ T\leq t_{\varphi}\leq 2T}} \vert \lambda_{\varphi}(\ell)\vert^2 \ll_{q,\epsilon}  (\ell T)^{\epsilon} (T^2+\sqrt{\ell}). \label{eq:dens_stro}
\end{equation}
This can be derived from the Kuznetsov formula and is stated, for example, in the proof of \cite[Lemma~3.3]{Ch}; see also \cite[Lemma~2.4]{Ch2}. Putting everything together leads to
\[\sum_{\substack{\varphi\in \mathcal{B}(1) \\ T\leq t_{\varphi}\leq 2T}} \vert \lambda_{\varphi}(m)\vert^2\cdot \vert \varphi(z_0)\vert^2 \ll_{z_0,\epsilon}  m^{\epsilon}T^{1+\epsilon}\cdot \left(T^2+m\right)^{\frac{1}{2}}.\]
This evidently implies \eqref{eq:enhan_density} and completes the proof.
\end{proof}

\begin{rem}
It is already pointed out in \cite[Remark~8.3]{JK}, that \eqref{eq:enhan_density} is the correct variation of the density hypothesis required to establish optimal bounds for the pointwise Diophantine exponents $\kappa(z_0)$. In the same remark, Jana and Kamber mention the possibility of establishing \eqref{eq:enhan_density} for \textit{some $z_0$ using different methods.} To the best of our knowledge, this proof has never appeared in print. In private communication, we have been told that the envisioned argument is dynamical in nature and applies to $z_0$ with certain Diophantine properties, but not necessarily to Heegner points.
\end{rem}

\begin{rem}
Our discussion of Diophantine exponents focuses exclusively on $\mathbb{H}$. However, the definitions apply in great generality and it is an interesting problem to study versions of $\kappa$ and $\kappa(z)$ in different settings. In general, the pointwise Diophantine exponent $\kappa(z)$ can be very sensitive to the point $z$. A concrete example of such phenomena was worked out in \cite[Corollary~1.7]{Sard}, where the pointwise Diophantine exponent is considered for the spheres and turns out to be much larger than the Diophantine exponent $\kappa$ in some cases; cf.~\cite[Section~2.1]{JK}. This is in sharp contrast to what we observe here in the case of $\mathbb{H}$.
\end{rem}

The proof of Corollary~\ref{cor:diophantine} reduces to proving the inequality \eqref{eq:enhan_density}. The proof that we gave above of this inequality uses as an input Theorem~\ref{th:4th_heegner}. The inequality \eqref{eq:enhan_density} also follows from the following result.

\begin{theorem}
\label{thm:twistedsecondmoment}
For a Heegner point $z_0\in \mathrm{SL}_2(\mathbb{Z})\backslash \mathbb{H}$, we have that
\begin{multline*}
\sum_{\varphi\in \mathcal{B}(1)} \lambda_{\varphi}(\ell) \cdot \vert \varphi(z_0)\vert^2 e^{-\frac{t_{\varphi}^2}{T^2}} + \frac{1}{4\pi} \int_{-\infty}^{\infty} \lambda(\ell,t) \cdot \left\vert E\left(z_0,\frac{1}{2} + it\right)\right\vert^2 e^{-\frac{t^2}{T^2}} \, dt    \\
\ll_{z_0,\epsilon} (\ell T)^{\epsilon} \left(\frac{T^2}{\sqrt{\ell}} + \sqrt{\ell}\right).
\end{multline*}
\end{theorem}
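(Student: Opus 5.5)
The plan is to bring in the Hecke operator $T_\ell$ through a pre-trace formula evaluated at $z=w=z_0$, and then handle the geometric side by counting matrices of determinant $\ell$ that nearly fix $z_0$; the Heegner structure of $z_0$ turns this count into a lattice-point problem in an imaginary quadratic order. Concretely, choose a point-pair invariant $k(u)$, with $u=u(z,w)=\frac{|z-w|^2}{4\Im z\Im w}$, whose Selberg/Harish-Chandra transform is $h(t)=e^{-t^2/T^2}$. The pre-trace formula with the Hecke operator $T_\ell$ inserted (as in \cite[Chapter~8]{Iw}) gives
\[
\sum_{\varphi} \lambda_\varphi(\ell)|\varphi(z_0)|^2 h(t_\varphi) + \frac{1}{4\pi}\int \lambda(\ell,t)\left|E\left(z_0,\tfrac12+it\right)\right|^2 h(t)\,dt = \frac{1}{\sqrt{\ell}}\sum_{\substack{\gamma\in \mathrm{Mat}_2(\mathbb{Z})\\ \det\gamma=\ell}} k(u(\gamma z_0,z_0)).
\]
The left side is exactly the quantity in Theorem~\ref{thm:twistedsecondmoment}. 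For the Gaussian $h$, the kernel satisfies $k(u)\ll T^2$ for $u\le T^{-2+\epsilon}$ and decays faster than any power of $T^{-1}$ once $u\ge T^{-2+\epsilon}$; the tail is negligible. We therefore have to count $\gamma$ with $\det\gamma=\ell$ and $u(\gamma z_0,z_0)\le \delta:=T^{-2+\epsilon}$, weighting each by $T^2/\sqrt{\ell}$.

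For the counting step, write $z_0=\frac{-b+\sqrt{D}}{2a}$ with $b^2-4ac=D$. For $\gamma=\begin{pmatrix}\alpha&\beta\\ \gamma_3&\delta_0\end{pmatrix}$, a direct computation gives
\[
4\ell\,\Im(z_0)^2\,u(\gamma z_0,z_0)=|j(\gamma,z_0)|^{2}\,|\gamma z_0-z_0|^2,
\]
where $j(\gamma,z_0)\,(\gamma z_0-z_0)=\gamma_3z_0^2+(\delta_0-\alpha)z_0-\beta$. This equals $Q(\gamma)/(a\sqrt{?})$, with $Q$ the integral quadratic form built from $(\gamma_3,\delta_0-\alpha,\beta)$ against $(a,b,c)$; the normalisation is not important. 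The standard identity
\[
4\ell u = \frac{|a\gamma_3 \cdots|^2}{|D|}
\]
(in the style of \cite[Lemma~1.?]{Iw}) says that $\ell(4u+2)=$ a binary positive-definite form in two integer linear combinations plus the trace term, so that
\[
(\operatorname{tr}\gamma)^2 + |D|\cdot(\text{integer})^2\text{-type expression} = 4\ell+O(\ell u).
\]
Concretely, we map $\gamma$ to the pair $(t,\nu)$, where $t=\operatorname{tr}\gamma$ and $\nu\in\mathcal{O}_D$ encodes $\gamma_3 z_0^2+(\delta_0-\alpha)z_0-\beta$. The condition $u\le\delta$ then becomes $|\nu|^2\ll \ell\delta$ together with the norm relation $t^2 - D\,N(\nu)/(\text{const}) = 4\ell$. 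The fibres of $\gamma\mapsto(t,\nu)$ are bounded, since $\gamma$ is recovered from these data, the stabiliser of $z_0$ being finite.

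We then split according to whether $\nu=0$. When $\nu=0$, $\gamma$ commutes with the embedding of $\mathcal{O}_D$, so it corresponds to an element of $\mathcal{O}_D$ of norm $\ell$. There are $\ll \ell^{\epsilon}$ of these, each contributing at most $T^2/\sqrt{\ell}$, which gives the term $(\ell T)^\epsilon T^2/\sqrt\ell$. When $\nu\neq0$, we need $1\ll N(\nu)\ll \ell\delta$, so this range is empty unless $\ell\gg T^{2-\epsilon}$. For each admissible $\nu$, the relation $t^2=4\ell+c|D|N(\nu)$ has $O(1)$ solutions $t$. The number of $\nu$ with $N(\nu)\le X$ is $\ll X$, where $X=\ell T^{-2+\epsilon}$. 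The total is then
\[
\frac{T^2}{\sqrt\ell}\cdot \ell T^{-2+\epsilon}=T^\epsilon\sqrt\ell,
\]
which gives the second term. A sharper count, using the divisor bound on representations of $t^2-4\ell$ by the norm form, is available if needed.

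The main obstacle is the middle step: deriving the exact algebraic identity that expresses $u(\gamma z_0,z_0)$ through the norm of an element of the order (or of a lattice attached to $z_0$), and confirming that the fibres are bounded uniformly in $\ell$. We would first try the classical quaternion/CM computation. Identify $\mathrm{Mat}_2(\mathbb{Q})=K\oplus K\jmath$ with $K=\mathbb{Q}(\sqrt D)$ embedded so that it fixes $z_0$, write $\gamma=x+y\jmath$, and check that $u$ is proportional to $N(y)/\det\gamma$. After that, we would carry out the lattice-point count for the $y$-component while keeping track of the integrality constraints.
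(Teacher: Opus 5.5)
Your overall route is the paper's: the pretrace formula with $T_\ell$, unfolding to $\ell^{-1/2}\sum_{\det\gamma=\ell}k(u(\gamma z_0,z_0))$, and then a lattice count in which the CM structure of $z_0$ splits $\gamma$ into a part commuting with $K$ and a complementary part. The paper does this by simultaneously diagonalising $Q_1,Q_2$ in Lemma~\ref{lem:uquadform} and \eqref{eqn:quadformCOV}; your $a,b,c$ are its $A,B,C$. Your identity $4\ell\,\Im(z_0)^2u(\gamma z_0,z_0)=|\nu|^2$ is correct. It gives $u=a^2N(\nu)/(|D|\ell)$, i.e.\ $Q_1=a^2N(\nu)$, very efficiently.

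\textbf{The gap is in the counting step.} The relation ``$t^2-DN(\nu)/\mathrm{const}=4\ell$'' is false, and so is the claim that each $\nu$ admits $O(1)$ traces $t$. The data $t$ and $\nu$ account for only three of the four coordinates of $\gamma$. The missing one is the coefficient $\lambda$ of $M=\begin{psmallmatrix}-b&-2c\\2a&b\end{psmallmatrix}$, which satisfies $M^2=D$ and $Mz_0=z_0$. Write $\gamma=\frac t2 I+\frac\lambda2 M+Y$ with $\mathrm{tr}(Y)=\mathrm{tr}(YM)=0$, so that $\nu$ depends only on $Y$. Then $-\det Y=\ell u$ and
\[t^2+|D|\lambda^2=4\ell(1+u)=4\ell+\frac{4a^2N(\nu)}{|D|}.\]
For example, take $z_0=i$ and $\gamma=\begin{psmallmatrix}2&-1\\1&2\end{psmallmatrix}$. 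Then $\ell=5$, $\nu=0$, $t=4$, so $t^2\neq 4\ell$, whereas $t^2+|D|\lambda^2=16+4=20$. Your own $\nu=0$ case, where you correctly count $\ll\ell^{\epsilon}$ elements of norm $\ell$, is already inconsistent with ``$O(1)$ values of $t$''. For the same reason your ``sharper count via representations of $t^2-4\ell$'' fails: at fixed $t$, the true equation $\frac{4a^2}{|D|}N(\nu)-|D|\lambda^2=t^2-4\ell$ is indefinite in $(\nu,\lambda)$.

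\textbf{The repair is exactly the paper's Proposition~\ref{prop:Gammaellubound}.} Fix $\nu$, and set $s=|D|\lambda=-\mathrm{tr}(\gamma M)\in\mathbb{Z}$. The pair $(t,s)$ is then a representation of the integer $4|D|\ell+4a^2N(\nu)$ by the positive definite form $|D|t^2+s^2$, so there are $\ll_\epsilon(\ell+N(\nu))^\epsilon$ such pairs, and $\gamma$ is determined by $(t,\lambda,\nu)$. Now sum over $\nu\in\frac1a(\mathbb{Z}+\mathbb{Z}z_0)$ with $a^2N(\nu)\le |D|\ell\delta$. This gives $\ll\ell^\epsilon$ for $\delta<1/(|D|\ell)$ and $\ll(\ell\delta)^{1+\epsilon}$ otherwise. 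It is the paper's bound via $\sum_{n\le|D|\ell\delta}r_{Q_3}(A|D|n)\,r_{Q_3}(A|D|(n+|D|\ell))$, where $r_{Q_3}(m)$ counts representations of $m$ by $Q_3$; the first factor counts your $\nu$ and the second your $(t,\lambda)$.

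\textbf{With that count, the rest of your argument goes through.} Your sharp truncation at $u\le T^{-2+\epsilon}$, using $k(u)\ll T^2$, is a legitimate substitute for the paper's partial summation against the $k'$ bounds of Proposition~\ref{prop:k'bound}. You must, however, kill the tail using the count for all $\delta$ together with the decay $k(u)\ll T^{O(1)}e^{-T^2(\arsinh\sqrt{u})^2}$.

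\textbf{A smaller point.} Your displayed pretrace identity is not exact. The constant eigenfunction contributes $\frac{3}{\pi}e^{1/(4T^2)}\ell^{-1/2}\sum_{ad=\ell}d$, which must be subtracted as in Lemma~\ref{lem:pretrace}. It is $O(\ell^{1/2+\epsilon})$, so it is harmless.
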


Here $\lambda(\ell,t) = \sum_{ab = \ell} a^{it} b^{-it}$ denotes the $\ell$-th Hecke eigenvalue of the real analytic Eisenstein series $E(z,\frac{1}{2} + it)$.

\begin{proof}[Second proof of \eqref{eq:enhan_density}]
Via positivity, the Hecke relations
\[\lambda_{\varphi}(m) = \sum_{\ell \mid m} \lambda_{\varphi}(\ell^2), \qquad \lambda(m,t) = \sum_{\ell \mid m} \lambda(\ell^2,t),\]
and Theorem~\ref{thm:twistedsecondmoment},
\begin{align*}
& \sum_{\substack{\varphi\in \mathcal{B}(1) \\ T \leq t_{\varphi} \leq 2T}} \vert\lambda_{\varphi}(m)\vert^2 \cdot \vert \varphi(z_0)\vert^2	\\
& \leq \sum_{\varphi\in \mathcal{B}(1)} \vert\lambda_{\varphi}(m)\vert^2 \cdot \vert \varphi(z_0)\vert^2 e^{-\frac{t_{\varphi}^2}{T^2}} + \frac{1}{4\pi} \int_{-\infty}^{\infty} \vert\lambda(m,t)\vert^2 \cdot \left\vert E\left(z_0,\frac{1}{2} + it\right)\right\vert^2 e^{-\frac{t^2}{T^2}} \, dt	\\
& = \sum_{\ell \mid m} \left(\sum_{\varphi\in \mathcal{B}(1)} \lambda_{\varphi}(\ell^2) \cdot \vert \varphi(z_0)\vert^2 e^{-\frac{t_{\varphi}^2}{T^2}} + \frac{1}{4\pi} \int_{-\infty}^{\infty} \lambda(\ell^2,t) \cdot \left\vert E\left(z_0,\frac{1}{2} + it\right)\right\vert^2 e^{-\frac{t^2}{T^2}} \, dt\right) \hspace{-1cm}	\\
& \ll_{z_0,\epsilon} (mT)^{\epsilon}(T^2 + m).\qedhere
\end{align*}
\end{proof}

In Section \ref{sec:pretrace}, we prove Theorem~\ref{thm:twistedsecondmoment} by more classical means, namely via the pretrace formula combined with a counting argument. This circumvents the usage of Waldspurger's formula to prove \eqref{eq:enhan_density} (cf. Remark \ref{rem:HK}). Our proof is inspired by the proof of \cite[Lemma~3.1]{Ch2}, where this result is proven in the special case $z_0 = i$.


\textbf{Acknowledgement:} The first author would like to thank V.~Blomer for his support and encouragement as well as A.~Pascadi for helpful conversations concerning (relevant) shifted convolution problems. Furthermore, both authors are grateful to F.~Chamizo for bringing them together as well as to S.~Jana for useful comments on an earlier version of the paper.  The first author is supported by the ERC Advanced Grant 101054336, the Germany Excellence Strategy grant EXC-2047/1-390685813, and also partially by the Deutsche Forschungsgemeinschaft (DFG, German Research Foundation) – Project-ID 491392403 – TRR 358. The second author is supported by the National Science Foundation (grant DMS-2302079) and by the Simons Foundation (award 965056).

\section{Preliminaries}\label{sec:prelim}

\subsection{Hecke--Maa\ss{} forms}

For $q\in \mathbb{N}$, the Hecke congruence subgroup of $\mathrm{SL}_2(\mathbb{Z})$ is defined by
\[\Gamma_0(q) = \left\{\left(\begin{matrix} a & b \\ qc&d\end{matrix}\right)\colon a,b,c,d\in \mathbb{Z} \text{ and }ad-qbc=1  \right\} \subseteq \mathrm{SL}_2(\mathbb{Z}).\]
We write $\Gamma$ for $\Gamma_0(1) = \mathrm{SL}_2(\mathbb{Z})$. The group $\Gamma_0(q)$ acts on the upper half plane $\mathbb{H}$ via M\"obius transformations and we can form the quotient space $\Gamma_0(q)\backslash \mathbb{H}$. This is an orbifold, which we equip with the probability Haar measure.

Recall that $\mathcal{B}(q)$ denotes an orthonormal basis of the space $L_{\mathrm{cusp}}^2(\Gamma_0(q)\backslash \mathbb{H})$ consisting of Hecke--Maa\ss{} cusp forms.
These are eigenfunctions of $\Delta$ with eigenvalue $\frac{1}{4}+t_{\varphi}^2$ as well as of the Hecke operators $T_m$ (for $(m,q)=1$) with eigenvalues $\lambda_{\varphi}(m)$. The Fourier expansion (at $\infty$) of an element $\varphi\in \mathcal{B}(q)$ can be written as
\[\varphi(z) = \sum_{\substack{n = -\infty \\ n\neq 0}}^{\infty} \rho_{\varphi}(n) W_{0,it_{\varphi}}(4\pi \vert n\vert y)e(nx).\]

For technical reasons, it shall prove important to arrange the basis $\mathcal{B}(q)$ conveniently. To do so, we recall that a Hecke--Maa\ss{} cusp form $\varphi\colon \Gamma_0(q)\backslash \mathbb{H}\to \mathbb{C}$ can be ad\`{e}lised (i.e.\ lifted to an ad\`{e}lic automorphic form for $\mathrm{GL}_2(\mathbb{A})$). This ad\`{e}lic lift can in turn be used to generate a cuspidal automorphic representation $\pi_{\varphi}$ of $\mathrm{GL}_2(\mathbb{A})$. Given a cuspidal automorphic representation $\pi$, we write $c(\pi)$ for its conductor. Using this, we can organise our basis of cusp forms as follows:
\[\mathcal{B}(q) = \bigsqcup_{\substack{\pi \textrm{ cusp.\ aut.}\\ c(\pi)\mid q}} \mathcal{B}_{\pi}(q),\]
where $\mathcal{B}_{\pi}(q) = \{ \varphi\in \mathcal{B}(q)\colon \pi = \pi_{\varphi}\}$. By newform theory, we have that $\sharp \mathcal{B}_{\pi}(c(\pi)) = 1$. The (up to scaling by elements in $S^1$) unique element in $\mathcal{B}_{\pi}(c(\pi))$ is called the newform and we denote it by $\varphi_{\pi}$. Without loss of generality, we may assume that $\varphi_{\pi}\in \mathcal{B}(q)$.\footnote{Recall that we take the inner product on $L^2(\Gamma_0(q)\backslash \mathbb{H})$ with respect to the probability Haar measure, so that for each $q'\mid q$, the natural embedding $L^2(\Gamma_0(q')\backslash  \mathbb{H})\hookrightarrow L^2(\Gamma_0(q)\backslash  \mathbb{H})$ is an isometry.}

Given a cuspidal automorphic representation $\pi$ with $c(\pi)\mid q$, the corresponding newform $\varphi_{\pi}$ is an eigenfunction the Hecke operators $T_m$ for all $m\in \mathbb{N}$. To shorten notation, we write $\lambda_{\pi}(m) = \lambda_{\varphi_{\pi}}(m)$ for the corresponding Hecke eigenvalues; similarly, we write $t_{\pi} = t_{\varphi_{\pi}}$ for the spectral parameter. We observe that
\begin{equation}
	\rho_{\varphi_{\pi}}(n) = \sqrt{n}\lambda_{\pi}(n)\cdot \rho_{\varphi_{\pi}}(1) \text{ for }n\in \mathbb{N}.\label{eq:f_h_rel}
\end{equation}
Furthermore, the Rankin--Selberg method produces the relation
\begin{equation}
	\vert \rho_{\varphi_{\pi}}(1)\vert^2 \asymp_q \frac{\cosh(\pi t_{\pi})}{L(1,\pi,\mathrm{ad})}.\label{eq:first_fc}
\end{equation}
Finally, we recall that $L(1,\pi,\mathrm{ad})$ is positive and we have the standard bounds \cite{HL,Li}
\begin{equation}
	T^{-\epsilon}\ll_{q,\epsilon} L(1,\pi,\mathrm{ad}) \ll_{q,\epsilon} T^{\epsilon}.\label{eq:lo_up_bounds}
\end{equation}

A key tool in our estimate shall be played by the following version of the spectral large sieve.

\begin{lemma}\label{lm:spec_larg}
Let $q,N\in \mathbb{N}$ and $T\geq 1$. For a sequence $(a_n)_{n\in \mathbb{N}}$ of complex numbers, we have that
\[\sum_{\substack{\pi \textrm{ cusp.\ aut.}\\ c(\pi)\mid q}} \frac{1}{L(1,\pi,\mathrm{ad})}\left\vert \sum_{N<n\leq 2N} a_n \lambda_{\pi}(n)\right\vert^2 \ll_{q,\epsilon} (T^2+N^{1+\epsilon})\cdot \sum_{N<n\leq N} \vert a_n\vert^2.\]
\end{lemma}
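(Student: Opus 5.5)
We will derive Lemma~\ref{lm:spec_larg} from the Kuznetsov formula with a nonnegative test function, together with the standard bound for sums of Kloosterman sums. The sum on the left is implicitly restricted to $T\le t_\pi\le 2T$; this is how it is used later, and without it the factor $T^2$ would make no sense.

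\textbf{Step 1: positivity and the test function.} Choose a test function $h$ on the spectral parameter with the following properties:
\begin{itemize}
\item $h(t)\ge 0$ for $t\in\mathbb{R}\cup i(-\frac12,\frac12)$;
\item $h(t)\gg 1$ for $T\le t\le 2T$;
\item $h$ is even and holomorphic in a strip $|\Im t|\le \frac12+\delta$, with rapid decay there.
\end{itemize}
A concrete choice is $h(t)=\frac{t^2+1/4}{T^2}\bigl(e^{-(t-T)^2/T^2}+e^{-(t+T)^2/T^2}\bigr)$, which is nonnegative on the exceptional spectrum.

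By positivity we may enlarge the left-hand side to $\sum_\pi h(t_\pi)\,L(1,\pi,\mathrm{ad})^{-1}\,\bigl|\sum_n a_n\lambda_\pi(n)\bigr|^2$. By \eqref{eq:f_h_rel} and \eqref{eq:first_fc}, up to constants depending on $q$, $L(1,\pi,\mathrm{ad})^{-1}\lambda_\pi(n)\overline{\lambda_\pi(m)}$ equals $\rho_{\varphi_\pi}(n)\overline{\rho_{\varphi_\pi}(m)}/(\sqrt{nm}\cosh\pi t_\pi)$.

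\textbf{Step 2: passing to the full basis.} The Kuznetsov formula for $\Gamma_0(q)$ runs over an orthonormal basis of all of $\mathcal{B}(q)$, not only over newforms. Since each newform $\varphi_\pi$ with $c(\pi)\mid q$ lies in $\mathcal{B}(q)$ by our convention, we may complete $\{\varphi_\pi\}$ to an orthonormal basis and add the remaining (oldform) terms. These terms are nonnegative because they are squares $\bigl|\sum_n (a_n/\sqrt n)\rho_\varphi(n)\bigr|^2$. We likewise add the nonnegative Eisenstein contribution.

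\textbf{Step 3: applying Kuznetsov.} Opening the square gives two terms.
\begin{itemize}
\item The diagonal term is $\delta_{m=n}\int h(t)\,t\tanh(\pi t)\,dt\ll T^2$, contributing $\ll T^2\sum|a_n|^2$.
\item The off-diagonal term is $\sum_{m,n}\frac{a_n\overline{a_m}}{\sqrt{nm}}\sum_{q\mid c}\frac{S(m,n;c)}{c}H\bigl(\frac{4\pi\sqrt{mn}}{c}\bigr)$, where $H$ is the Bessel transform of $h$.
\end{itemize}

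\textbf{Step 4: the Kloosterman term (main obstacle).} This is the real difficulty. Estimating the transform pointwise together with Weil's bound gives only $N^{3/2}$-type bounds. The sharp bound $(T^2+N)$ is exactly the Deshouillers--Iwaniec spectral large sieve, \cite[Theorem~2]{Iw}-type (see Iwaniec's book, Theorem~7.24 / Deshouillers--Iwaniec, Theorem~2). We would invoke that result for $\Gamma_0(q)$, with the normalised coefficients $\rho_\varphi(n)/\sqrt{\cosh\pi t_\varphi}$, since this is precisely the statement
\[
\sum_{|t_\varphi|\le K}\frac{1}{\cosh\pi t_\varphi}\Bigl|\sum_{n\sim N} b_n\rho_\varphi(n)\Bigr|^2\ll_{q,\epsilon}(K^2+N^{1+\epsilon})\sum|b_n|^2 .
\]
Take $b_n=a_n/\sqrt n$ and $K=2T$, so that $\sum|b_n|^2\asymp N^{-1}\sum|a_n|^2$.

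\textbf{Step 5: normalisation.} The remaining factor of $N$ is absorbed by the normalisation. Indeed, $\rho_{\varphi}(n)$ for the probability measure with the Whittaker function $W_{0,it}(4\pi|n|y)$ carries no extra $\sqrt n$ weight relative to Iwaniec's normalisation, and \eqref{eq:f_h_rel} already inserts $\sqrt n$. One checks that these powers cancel, so $\sum|b_n\rho(n)|^2$ matches $\sum|a_n\lambda(n)|^2$.

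If one instead wants a self-contained proof, the hardest step is bounding $\sum_c S(m,n;c)c^{-1}H(4\pi\sqrt{mn}/c)$ on average over $m,n$ via the large sieve for Kloosterman fractions. We would quote the black box rather than redo it.
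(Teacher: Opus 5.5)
Your argument is in substance the paper's proof. The paper rewrites $L(1,\pi,\mathrm{ad})^{-1}|\sum_n a_n\lambda_\pi(n)|^2$ via \eqref{eq:f_h_rel} and \eqref{eq:first_fc} as a $\cosh(\pi t_\pi)^{-1}$-weighted square of Fourier coefficients, enlarges the sum to all of $\mathcal{B}(q)$ by positivity, and quotes the Deshouillers--Iwaniec spectral large sieve (in the form of Drappeau's Proposition~4.7, with the implicit restriction $|t_\varphi|\le T$ that you correctly noticed is missing from the statement). Your Steps~1 and~3 (choice of $h$, opening the square, Kuznetsov) play no role, since in Step~4 you quote the same black box rather than prove it.

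What does need fixing is the normalisation in Steps~4--5, which as written produces a false inequality. You combine three things: the large sieve $\sum_{|t_\varphi|\le K}\cosh(\pi t_\varphi)^{-1}|\sum_n b_n\rho_\varphi(n)|^2\ll(K^2+N^{1+\epsilon})\sum_n|b_n|^2$, the relation $\rho_{\varphi_\pi}(n)=\sqrt{n}\lambda_\pi(n)\rho_{\varphi_\pi}(1)$, and the choice $b_n=a_n/\sqrt{n}$. Together these give the bound $(T^2+N^{1+\epsilon})N^{-1}\sum_n|a_n|^2$. For $a_n=\delta_{n=n_0}$ with $n_0\asymp T$ this bound is $T^{1+\epsilon}$. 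The left-hand side, however, is $\asymp T^2$: this is your own diagonal term from Step~3, and the Kloosterman term is negligible in this range.

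Saying that the factor $N$ is ``absorbed by the normalisation'' does not repair this, because the two inputs are stated in incompatible normalisations. Take the Whittaker function $W_{0,it}(4\pi|n|y)=2\sqrt{|n|y}K_{it}(2\pi|n|y)$ and $T_n$ normalised by $n^{-1/2}$. Comparing coefficients in $T_n\varphi_\pi=\lambda_\pi(n)\varphi_\pi$ then gives $\sqrt{n}\,\rho_{\varphi_\pi}(n)=\rho_{\varphi_\pi}(1)\lambda_\pi(n)$. So, as typeset, \eqref{eq:f_h_rel} has the $\sqrt{n}$ on the wrong side, which is presumably the source of your confusion. In this normalisation the large sieve is the one for $\sqrt{n}\rho_\varphi(n)$, which is exactly what the paper quotes. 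Hence you should simply use the coefficients $a_n$ themselves. By \eqref{eq:first_fc}, $|\sum_n a_n\sqrt{n}\rho_{\varphi_\pi}(n)|^2/\cosh(\pi t_\pi)\asymp_q L(1,\pi,\mathrm{ad})^{-1}|\sum_n a_n\lambda_\pi(n)|^2$, and no stray power of $N$ ever arises.
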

\begin{proof}
We first recall the classical  inequality 
\begin{equation}
	\sum_{\substack{\varphi\in \mathcal{B}(q)\\ \vert t_{\varphi}\vert \leq T}} \frac{1}{\cosh(\pi t_{\varphi})} \left\vert \sum_{N<n\leq 2N}a_n \sqrt{n}\rho_{\varphi}(n) \right\vert^2 \ll_{q,\epsilon} (T^2+N^{1+\epsilon})\cdot \sum_{N<n\leq N} \vert a_n\vert^2, \label{eq: classical_ls}
\end{equation}
as stated, for example, in \cite[Proposition~4.7]{Dr}. To use this, we first apply \eqref{eq:f_h_rel} and \eqref{eq:first_fc} to write
\[\sum_{\substack{\pi \textrm{ cusp.\ aut.}\\ c(\pi)\mid q}} \frac{1}{L(1,\pi,\mathrm{ad})}\left\vert \sum_{N<n\leq 2N} a_n \lambda_{\pi}(n)\right\vert^2 \ll_q \sum_{\substack{\pi \textrm{ cusp.\ aut.}\\ c(\pi)\mid q}} \frac{1}{\cosh(\pi t_{\pi})}\left\vert \sum_{N<n\leq 2N} a_n \rho_{\varphi_{\pi}}(n)\right\vert^2.\]
By positivity, we can extend the sum on the right hand side to a sum over $\mathcal{B}(q)$ and apply \eqref{eq: classical_ls} in order to complete the proof.
\end{proof}

\subsection{CM-forms}

Throughout this section, we let $D<0$ be a fundamental discriminant and consider the imaginary quadratic number field $K=\mathbb{Q}(\sqrt{D})$ with ring of integers $\mathcal{O}_K$. Let $\mathrm{Cl}_D$ denote the class group of $K$ and write $\widehat{\mathrm{Cl}_D}$ for the group of characters of $\mathrm{Cl}_D$. We extend an element $\xi\in \widehat{\mathrm{Cl}_D}$ to a function on ideals $\mathfrak{a}\subseteq \mathcal{O}_K$ in the obvious way and define for $n \in \mathbb{N}$ the multiplicative function
\[\lambda_{\xi}(n) = \sum_{\substack{0\neq \mathfrak{a}\subseteq \mathcal{O}_K\\ \mathrm{Nr}_K(\mathfrak{a})=n}} \xi(\mathfrak{a}).\]
Note that $\lambda_{\xi}(n)\ll_{\epsilon} n^{\epsilon}$ by standard divisor bounds. We also recall that the Hecke $L$-function associated to $\xi$ is given by
\[L(s,\xi) = \sum_{n = 1}^{\infty} \lambda_{\xi}(n)n^{-s} \quad \text{for }\Re(s)>1.\]
We let $\sigma_{\xi}$ denote the automorphic representation obtained from $\xi$ (viewed as a Hecke character over $K$) via automorphic induction. In particular, we have that
\[L(s,\xi) = L(s,\sigma_{\xi}).\]
The classical counterpart to $\sigma_{\xi}$ is the CM-form
\[f_{\xi}(z) = \sum_{n=0}^{\infty} \lambda_{\xi}(n)e(nz),\]
where we have set
\[\lambda_{\xi}(0) = \delta_{\xi = 1} \frac{\sqrt{\vert D\vert}L(1,\chi_D)}{2\pi}.\]
Note that $f_{\xi}$ is a modular form for $\Gamma_0(\vert D\vert)$ of weight one and nebentypus $\chi_D$, the primitive quadratic Dirichlet character modulo $|D|$. The modular form $f_{\xi}$ (and thus the automorphic representation $\sigma_{\xi}$) is cuspidal for complex $\xi$ and is non-cuspidal for real characters (i.e.\ genus characters) $\xi$. In the latter case, $f_{\xi}$ can be expressed in terms of certain Eisenstein series. We refer to \cite[Section~2.3]{Risager} for further discussion.

\subsection{Rankin--Selberg \texorpdfstring{$L$}{L}-functions}

Let $\pi$ and $\sigma$ be automorphic representations of $\mathrm{GL}_2(\mathbb{A})$. At the moment, we do not require that these be cuspidal. In practice, we shall encounter situations where $\sigma$ arises from a certain Eisenstein series. We define the Rankin--Selberg $L$-function $L(s,\pi\otimes \sigma)$ of $\pi$ and $\sigma$ as, for example, in \cite[Section~3]{HM}. For our application below, we shall need a suitable approximate functional equation for this $L$-function. We shall use the following version, which is tailored to our particular set-up.

\begin{lemma}\label{lm:approx_fe}
Let $T\geq 1$ and $q\in \mathbb{N}$ and let $D<0$ be a fundamental discriminant. Let $\pi$ be a cuspidal automorphic representation of $\mathrm{PGL}_2(\mathbb{A})$ with $c(\pi)\mid q$ and $T\leq t_{\pi}\leq 2T$ and let $\xi\in \widehat{\mathrm{Cl}_D}$. Then
\[L\left(\frac{1}{2},\pi\otimes \sigma_{\xi}\right) \ll_{D,q,\epsilon} T^{\epsilon}\max_{N\leq T^{2+\epsilon}}\frac{\vert \mathcal{S}_{\pi,\xi}(N)\vert}{\sqrt{N}}  + T^{\epsilon},\]
where
\begin{equation}
	\mathcal{S}_{\pi,\xi}(N) = \sum_{n = 1}^{\infty} \lambda_{\pi}(n)\lambda_{\xi}(n) W\left(\frac{n}{N}\right) \label{eq:def_S}
\end{equation}
with $W$ a smooth function supported in $[\frac{1}{2},\frac{5}{2}]$ satisfying $W(x)\ll 1$.
\end{lemma}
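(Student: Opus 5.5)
We prove Lemma~\ref{lm:approx_fe} with the standard approximate functional equation, followed by a dyadic decomposition. Write $\Lambda(s,\pi\otimes\sigma_\xi) = L_\infty(s,\pi\otimes\sigma_\xi)L(s,\pi\otimes\sigma_\xi)$ for the completed Rankin--Selberg $L$-function. It satisfies a functional equation $\Lambda(s) = \varepsilon\,\Lambda(1-s)$, where $\vert\varepsilon\vert=1$, since $\pi$ is self-dual on $\mathrm{PGL}_2$ and $\sigma_\xi$ has the contragredient $\sigma_{\bar\xi}$. The conductor is $\asymp_{D,q} (1+t_\pi)^4\asymp T^4$. Indeed, $\sigma_\xi$ has archimedean component of weight one (parameters $0,1$), and $\pi_\infty$ has parameters $\pm it_\pi$, so $L_\infty$ is a product of four $\Gamma_\mathbb{R}$-factors at shifts $\pm it_\pi$ and $\pm it_\pi+1$.

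\textbf{Step 1: approximate functional equation.} By the usual contour-shift argument (e.g.\ Iwaniec--Kowalski, Theorem~5.3),
\[L\left(\tfrac12,\pi\otimes\sigma_\xi\right) = \sum_{n\ge1}\frac{a_{\pi,\xi}(n)}{\sqrt n}V_1\left(\frac{n}{T^2}\right)+\varepsilon\sum_{n\ge1}\frac{\overline{a_{\pi,\xi}(n)}}{\sqrt n}V_2\left(\frac{n}{T^2}\right).\]
Here $a_{\pi,\xi}(n)$ are the Dirichlet coefficients. The weights $V_j$ satisfy $V_j(x)\ll_A x^{-A}$ for $x\ge T^{\epsilon}$, they are $\ll 1$ for small $x$, and they have controlled derivatives uniformly in $T\le t_\pi\le 2T$, by Stirling. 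The point is to express $a_{\pi,\xi}(n)$ through $\lambda_\pi(n)\lambda_\xi(n)$. The standard identity $L(s,\pi\otimes\sigma_\xi)=L(2s,\chi_D\omega)^{-1}\sum_n\lambda_\pi(n)\lambda_\xi(n)n^{-s}$ holds up to finitely many Euler factors at primes dividing $qD$; here the central characters are $\chi_D$ and $1$. These bad factors, and the factor $L(2s,\chi_D)$, contribute a Dirichlet series $\sum_m b(m)m^{-s}$ with $b(m)\ll m^{\theta'}$, $\theta'<\frac12$, supported essentially on $m=d^2 m'$ with $m'\mid (qD)^\infty$.

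\textbf{Step 2: reduce to $\mathcal{S}_{\pi,\xi}$.} To avoid carrying this convolution, I instead insert the factorisation into the Mellin integral defining the approximate functional equation. This gives
\[L\left(\tfrac12\right)=\sum_{m}\frac{b(m)}{\sqrt m}\sum_n\frac{\lambda_\pi(n)\lambda_\xi(n)}{\sqrt n}V_1\left(\frac{mn}{T^2}\right)+(\text{dual}).\]
The sum over $m$ converges absolutely after truncating at $m\le T^{\epsilon}$, at cost $T^{\epsilon}$; beyond that, the decay of $V$ together with $b(m)\ll m^{\theta'}$ makes the tail $O(T^{-A})$. Next take a smooth dyadic partition of unity $1=\sum_{N} W(n/N)$ with $W$ supported in $[\frac12,\frac52]$ and $N$ ranging over powers of $2$. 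Then
\[\frac{1}{\sqrt n}V_1\left(\frac{mn}{T^2}\right)W\left(\frac nN\right)=\frac{1}{\sqrt N}\widetilde W\left(\frac nN\right),\]
where $\widetilde W(x)=x^{-1/2}V_1(mNx/T^2)W(x)$ is again supported in $[\frac12,\frac52]$ and bounded. The dependence on $m,N$ through $V_1$ does not matter, because the lemma only asks for a bounded $W$ inside the maximum. The ranges $N>T^{2+\epsilon}$ contribute $O(T^{-A})$ using $\lambda_\pi(n)\ll n^{1/2}$ and the rapid decay. There are $O(\log T)$ dyadic ranges, so the total is bounded by $T^{\epsilon}\max_{N\le T^{2+\epsilon}}\vert\mathcal{S}_{\pi,\xi}(N)\vert/\sqrt N$.

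\textbf{Step 3: dual sum and the remaining term.} The dual sum has coefficients $\overline{\lambda_\pi(n)}\,\overline{\lambda_\xi(n)}=\lambda_\pi(n)\lambda_{\bar\xi}(n)$, since $\lambda_\pi$ is real and $\lambda_\xi(n)$ is conjugated by replacing $\xi$ with $\bar\xi$. It is treated identically. To stay with the same $\xi$, I use that $\lambda_{\bar\xi}(n)=\lambda_\xi(n)$, which follows from the ideal-conjugation bijection $\mathfrak a\mapsto\bar{\mathfrak a}$ preserving norms, since $\xi(\bar{\mathfrak a})=\bar\xi(\mathfrak a)$. When $\xi$ is a genus character, $\sigma_\xi$ is Eisenstein and $L(s,\pi\otimes\sigma_\xi)=L(s,\pi\otimes\chi_1)L(s,\pi\otimes\chi_2)$. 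Since $\pi$ is cuspidal, there are still no poles and the same argument applies verbatim. The additive $T^{\epsilon}$ absorbs the truncation errors and any residue-free boundary terms.

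\textbf{Main obstacle.} I expect the main obstacle to be the bookkeeping of the ramified Euler factors at primes dividing $qD$ and of the $L(2s,\chi_D)^{-1}$ correction, ensuring they only cost $T^{\epsilon}$. The archimedean analysis is routine.
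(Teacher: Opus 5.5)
There is a genuine gap, and it sits exactly at the point the lemma is designed to handle. In Step~2 your weight $\widetilde W(x)=x^{-1/2}V_1(mNx/T^2)W(x)$ inherits from $V_1$ the archimedean ratio $L_\infty(\tfrac12+u,\pi\otimes\sigma_\xi)/L_\infty(\tfrac12,\pi\otimes\sigma_\xi)$, which depends on $t_\pi$. You even note that $V_1$ is only controlled \emph{uniformly} in $t_\pi$. So the sums you produce are $\mathcal{S}_{\pi,\xi}(N)$ with a $\pi$-dependent weight.

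You wave this away (``the lemma only asks for a bounded $W$''). But the paper stresses right after the statement that $W$ must be independent of $\pi$. In the proof of Theorem~\ref{th:4th_heegner} one sums $|\mathcal{S}_{\pi,\xi}(N)|^2/L(1,\pi,\mathrm{ad})$ over $\pi$ and applies the spectral large sieve (Lemma~\ref{lm:spec_larg}) with coefficients $a_n=\lambda_\xi(n)W(n/N)$. Those coefficients cannot depend on $\pi$. What you have proved is the standard approximate functional equation plus a partition of unity, which the paper explicitly says is not enough.

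The missing step is to separate $n$ from $\pi$ by Mellin inversion; this is what costs the $T^{\epsilon}$. The paper gets it by citing \cite[Lemma~2.1]{Bl_RS}. Concretely:
\begin{enumerate}
\item Start from $\frac{1}{2\pi i}\int_{(2)}\frac{L_\infty(\frac12+u)}{L_\infty(\frac12)}L(\tfrac12+u,\pi\otimes\sigma_\xi)G(u)\frac{du}{u}$. On this line $L(s,\pi\otimes\sigma_\xi)=L(2s,\chi_D)H(s)\sum_n\lambda_\pi(n)\lambda_\xi(n)n^{-s}$ converges absolutely. Note the factor is $L(2s,\chi_D)$, not its inverse as you wrote; $H$ collects the bad Euler factors.
\item Insert $1=\sum_N W(n/N)$. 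Each $N$-piece is now a finite sum, so you may shift it to $\Re u=\epsilon$.
\item Truncate to $|\Im u|\le T^{\epsilon}$ using the decay of $G$, and take absolute values. The $\pi$-dependent gamma ratio, $L(1+2u,\chi_D)$ and $H(\tfrac12+u)$ become scalars of size $T^{O(\epsilon)}$ by Stirling.
\item What remains is $N^{-1/2}\bigl|\sum_n\lambda_\pi(n)\lambda_\xi(n)W_u(n/N)\bigr|$ with $W_u(x)=x^{-1/2-u}W(x)$. This depends on $u$ but not on $\pi$, which is all the large sieve needs once the $u$-integral is pulled outside by Cauchy--Schwarz.
\end{enumerate}

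This route also removes your $m$-convolution, where there is a second error. $V_1(mn/T^2)$ is not small for $T^{\epsilon}<m\le T^{2}$, so the tail of the $m=d^2$ sum beyond $T^{\epsilon}$ is not $O(T^{-A})$. The $m$-sum must run up to $T^{2+\epsilon}$. That is harmless, since $\sum_{m\le T^{2+\epsilon}}|b(m)|m^{-1/2}\ll T^{\epsilon}$, but it is not what you wrote.
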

\begin{proof}
When $\sigma_{\xi}$ is cuspidal, this follows directly from \cite[Lemma~2.1]{Bl_RS} after inserting a smooth partition of unity. The modifications necessary to deal with non-cuspidal $\xi$ are straightforward, since the $L$-function factors in this case.
\end{proof}

Note that, since our applications requires averaging over $\pi$, it is crucial that the smooth weight $W(\cdot)$ is independent of $\pi$. This comes at the cost of a relatively weak error term $O_{D,q,\epsilon}(T^{\epsilon})$, which will be sufficient for us. The standard approximate functional equation comes with a much better error term, but introduces non-trivial $\pi$ dependence in $W$. See for example \cite[(50)]{HM} or \cite[Section~3.1]{Risager} for statements of such approximate functional equations.

\subsection{Heegner points and Waldspurger's formula}\label{sec:Heegner}

We follow \cite{GKZ} to define the notion of Heegner points; see also \cite{BBK} for a glimpse of the ad\`{e}lic theory. Fix $q\in \mathbb{N}$ and let $D<0$ be a fundamental discriminant. Let $K=\mathbb{Q}(\sqrt{D})$. We assume the following conditions:
\begin{itemize}
	\item if $p\mid q$, then $p$ is either split or ramified in $K$;
	\item if $p^2\mid q$, then $p$ is split in $K$.
\end{itemize}
These conditions allow us to fix $r\in \mathbb{Z}/2q\mathbb{Z}$ such that 
\[r^2\equiv D \hspace{-.25cm} \pmod{4q}.\]
Following \cite{GKZ}, we define
\[m=\left( q,r,\frac{r^2-D}{4q}\right).\]
This is well defined even though $r$ is only defined modulo $2q$.

Let $\mathcal{Q}_{q,D}$ denote the set of integral binary quadratic forms $Q(x,y)=Ax^2+Bxy+Cy^2$ of discriminant $D$ and such that $A>0$, $A \equiv 0 \pmod{q}$ and $B\equiv r \pmod{2q}$. We also write $Q=[A,B,C]$, so that
\[\mathcal{Q}_{q,D}  = \{ [A,B,C]\colon B^2-4AC=D,\ A > 0,\ A \equiv 0 \hspace{-.25cm} \pmod{q},\ B\equiv r \hspace{-.25cm} \pmod{2q}\}.\]
We write $\mathcal{Q}_{q,D}^{\circ}$ for the subset of primitive $Q$ (i.e\ $Q=[A,B,C]\in \mathcal{Q}_{q,D}$ for which $(A/q,B,C)=1$). Note that $\Gamma_0(q)$ acts on $\mathcal{Q}_{q,D}^{\circ}$ from the left via
\[[\gamma Q](x,y) = Q(dx-by,ay-cx) \text{ for }\gamma=\left(\begin{matrix} a & b \\ c & d \end{matrix}\right)\in \Gamma_0(q).\]
Finally, given $Q=[A,B,C]\in \mathcal{Q}_{q,D}^{\circ}$, we associate the points
\[z_Q = \frac{-B+i\sqrt{\vert D\vert}}{2A}\in \mathbb{H}.\]
We note that $\gamma z_Q = z_{\gamma Q}$. A point $\Gamma_0(q)z_Q\in \Gamma_0(q)\backslash \mathbb{H}$ for $Q\in \mathcal{Q}_{q,D}$ is called a Heegner point of discriminant $D$ and level $q$. By abuse of notation, we continue to write $z_Q$ in place of $\Gamma_0(q)z_Q$, with the understanding that such a point is considered up to $\Gamma_0(q)$-equivalence.

After this general discussion, we briefly specialise to Heegner points of level $1$, which are arguably the most studied points. We write
\[\mathcal{H}_D = \{ z_Q\colon Q\in \mathcal{Q}_{1,D}^{\circ}\}.\]
It is well known that 
\[\sharp \mathcal{H}_D = \sharp \mathcal{Q}_{1,D}^{\circ} = \sharp \mathrm{Cl}_D.\]
The equalities between these cardinalities arise from natural one-to-one correspondences, which we use to associate $z_{\mathfrak{a}}\in \mathcal{H}_D$ to an ideal (class) $\mathfrak{a}\in \mathrm{Cl}_D$.

Let us now return to the general discussion. Given $Q=[qA,B,C]\in \mathcal{Q}_{q,D}^{\circ}$, we let
\[m_1(Q)=(q,B,A) \text{ and }m_2(Q)=(q,B,C).\]
One checks that $m=m_1(Q)m_2(Q)$ and $(m_1(Q),m_2(Q))=1$. We further split $q=q_1q_2$ with $(q_1,m_2(Q)) = (q_2,m_1(Q))=1$.

Let $m=m_1m_2$ with $(m_1,m_2)=1$. It is shown on \cite[p.505]{GKZ} that the map $Q\mapsto \widetilde{Q}=[q_1A,B,q_2C]$ defines a one-to-one correspondence
\[\Gamma_0(q)\backslash \{ Q\in \mathcal{Q}_{q,D}^{\circ}\colon  m_1(Q)=m_1 \text{ and }m_2(Q)=m_2 \} \longleftrightarrow \mathrm{SL}_2(\mathbb{Z})\backslash \mathcal{Q}_{1,D}^{\circ}.\]
This allows for the following construction: given $Q\in \mathcal{Q}_{q,D}^{\circ}$, we can write
\begin{equation}
	z_{Q} = g_{q_2} \cdot z_{\mathfrak{a}_0}, \label{eq:homogenisation}
\end{equation}
where $\mathfrak{a}_0\in \mathrm{Cl}_D$ is such that $z_{\mathfrak{a}_0} = z_{\widetilde{Q}}\in \mathcal{H}_D$ and $g_{q_2} = \mathrm{diag}(1,q_2)$.

\begin{example}\label{example}
If we take $q=1$ and $D=-4$, then
\[\Gamma\backslash \mathcal{Q}_{1,-4}^+ = \{ [1,0,1] \}.\]
Thus, $i\in \Gamma\backslash \mathbb{H}$ is the only Heegner point of discriminant $-4$. In this case, we have $\sharp \Gamma_{[1,0,1]} = 2$.

If we take $q=2$ and $D=-4$, then one easily checks that the condition proposed above is satisfied. We choose $r=2$ and observe that
\[\Gamma_0(2)\backslash \mathcal{Q}_{2,-4}^+ = \{[2,2,1]\}.\]
In particular, $\frac{-1 + i}{2}\in \Gamma_0(2)\backslash \mathbb{H}$ is the unique Heegner point of discriminant $-4$ and level $2$. Note that 
\[[2,2,1] = \left(\begin{matrix} 0 & -1 \\ 1 & 1 \end{matrix}\right)[1,0,1].\]
However, we can also write 
\[\frac{-1 + i}{2}=g_2 \cdot z_{[1,2,2]}\]
for $g_2=\mathrm{diag}(1,2)$. Note that $z_{[1,2,2]} = -1+i\in \mathcal{H}_{-4}$ is of course equivalent to $i=z_{[1,0,1]}$.
\end{example}

\begin{lemma}\label{lm:walds} 
Let $q\in \mathbb{N}$, $D<0$ a fundamental discriminant, and $T\geq 1$. Let $\pi$ be a cuspidal automorphic representation of $\mathrm{PGL}_2(\mathbb{A})$ for which $c(\pi) \mid q$ and $T\leq t_{\pi}\leq 2T$. For a Heegner point $z_0\in \Gamma_0(q)\backslash\mathbb{H}$ of discriminant $D$ and level $q$ and for $\varphi\in \mathcal{B}_{\pi}(q)$, we have that
\[\vert \varphi(z_0)\vert^2 \ll_{D,q} \sum_{\xi\in \widehat{\mathrm{Cl}_D}}\frac{L(\frac{1}{2},\pi\otimes \sigma_{\xi})}{L(1,\pi,\mathrm{ad})}.\]
\end{lemma}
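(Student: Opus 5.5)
We reduce the pointwise value $\varphi(z_0)$ to toric periods of $\varphi$ over the full Heegner orbit, then apply Waldspurger's formula with the explicit local computations available for Heegner points.

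\textbf{Step 1: reduce to level-one Heegner points.} By \eqref{eq:homogenisation} we write $z_0 = g_{q_2}\cdot z_{\mathfrak{a}_0}$ with $z_{\mathfrak{a}_0}\in\mathcal{H}_D$. Thus $\varphi(z_0)=\psi(z_{\mathfrak{a}_0})$, where $\psi = \varphi(g_{q_2}\,\cdot)$ is a cusp form for the conjugated group $g_{q_2}^{-1}\Gamma_0(q)g_{q_2}$, which contains $\Gamma_0(qq_2)$ up to finite index.

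\textbf{Step 2: spectrally decompose over the class group.} On the finite set $\{z_{\mathfrak{a}}\colon \mathfrak{a}\in\mathrm{Cl}_D\}$ we use Fourier inversion on $\mathrm{Cl}_D$:
\[\psi(z_{\mathfrak{a}_0}) = \frac{1}{h_D}\sum_{\xi\in\widehat{\mathrm{Cl}_D}} \overline{\xi(\mathfrak{a}_0)}\sum_{\mathfrak{a}\in\mathrm{Cl}_D}\xi(\mathfrak{a})\psi(z_{\mathfrak{a}}).\]
Cauchy--Schwarz then gives
\[|\varphi(z_0)|^2\le \sum_{\xi}\Big|h_D^{-1}\sum_{\mathfrak{a}}\xi(\mathfrak{a})\psi(z_{\mathfrak{a}})\Big|^2,\]
where $h_D$ and the number of characters depend only on $D$. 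Since $\varphi\in\mathcal{B}_\pi(q)$, the function $\psi$ is a vector in the space of $\pi$ fixed by an open compact subgroup depending only on $q$. Expanding $\psi$ in a fixed orthonormal basis of that finite-dimensional $K_f$-fixed space (oldforms $\varphi_\pi(d\,\cdot)$ with $d\mid qq_2$, of dimension $\ll_q 1$), and using Cauchy--Schwarz again, it suffices to bound each twisted period $P_\xi(v)=\sum_{\mathfrak{a}}\xi(\mathfrak{a})v(z_{\mathfrak{a}})$ for such a basis vector $v$.

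\textbf{Step 3: apply Waldspurger's formula.} The periods $P_\xi(v)$ are toric periods over $\mathbb{A}_K^\times$ twisted by $\xi$. Waldspurger's formula, in the explicit Gross--Zagier/Zhang form or in the form of Martin--Whitehouse or Ichino, gives
\[|P_\xi(v)|^2 = C\cdot \frac{L(\frac12,\pi\otimes\sigma_\xi)}{L(1,\pi,\mathrm{ad})}\prod_v \alpha_v,\]
with $C$ depending only on $D$ and the normalisation. The local factors $\alpha_p$ at $p\mid qD$ are bounded in terms of $q$ and $D$, since there are finitely many possible local representations up to the relevant data once the test vector is $K_0(q)$-type fixed, and $\alpha_p=1$ at unramified places.

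The archimedean factor is the main obstacle. We must check that for the spherical vector of the principal series with parameter $t_\pi$ the local integral equals (up to absolute constants) the ratio of archimedean $L$-factors, which is absorbed when we pass to completed versus finite $L$-values. Concretely, the $\Gamma$-factor ratio $L_\infty(\frac12,\pi\otimes\sigma_\xi)/L_\infty(1,\pi,\mathrm{ad})$ is $\asymp 1$ by Stirling, since the numerator is $|\Gamma(\frac14+\frac{it}2)|^4$-type and the denominator is $|\Gamma(\frac12+it)|^2$-type. This is what yields the stated bound with no power of $T$. We rely on the standard computation for the spherical vector (the torus is compact at infinity, so the local period is $1$ for the $K$-fixed vector).

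Summing over $\xi$ and over the $O_q(1)$ basis vectors gives the lemma. The non-cuspidal $\sigma_\xi$ cause no issue, as Waldspurger's formula holds with $\sigma_\xi$ Eisenstein and the $L$-function factoring as $L(\frac12,\pi\otimes\chi)L(\frac12,\pi\otimes\chi\chi_D)$.
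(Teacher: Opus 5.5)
Your proposal is correct and is essentially the paper's argument: homogenise $z_0=g_{q_2}\cdot z_{\mathfrak{a}_0}$, use orthogonality on $\mathrm{Cl}_D$ to pass to the $\xi$-twisted toric periods, reduce to $O_q(1)$ pure-tensor oldforms, and apply Waldspurger's formula, bounding the local integrals at $p\mid qD$ and using that at $\infty$ the torus is compact, so the spherical vector contributes a constant. The remaining blemishes are harmless: your Cauchy--Schwarz step should read $|\varphi(z_0)|^2\le h_D^{-1}\sum_{\xi}|P_\xi|^2$, and the Stirling remark is unnecessary, since the archimedean $L$-factors cancel identically against the normalised local integral; it is also mis-stated, because $\sigma_{\xi,\infty}$ has weight one, so $L_\infty(s,\pi\otimes\sigma_\xi)=\Gamma_{\mathbb{C}}(s+it_\pi)\Gamma_{\mathbb{C}}(s-it_\pi)$ and its ratio with $L_\infty(1,\pi,\mathrm{ad})$ is exactly constant by the duplication formula, whereas a numerator of size $|\Gamma(\frac14+\frac{it_\pi}{2})|^4$ would give a ratio $\asymp t_\pi^{-1}$.
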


Here we note that $L(\frac{1}{2},\pi\otimes \sigma_{\xi})$ is nonnegative due to the work of Waldspurger \cite{Wa}.

\begin{proof}
By newform theory, we may write $\varphi$ as a finite linear combination of oldforms, where the number of summands and the coefficients in this linear combination are bounded independently of $T$; see \cite[Section 3]{BM} for details. We may therefore assume without loss of generality that $\varphi$ is an oldform.

We now note that, according to our assumption that $z_0$ is a Heegner point of discriminant $D$ and level $q$, we can use \eqref{eq:homogenisation} to find $q_2\mid q$ (with $(q_2,q/q_1)=1$) and $\mathfrak{a}\in \mathrm{Cl}_D$ such that
\[z_0 = g_{q_2} \cdot z_{\mathfrak{a}_0}.\]
By positivity and orthogonality of characters,
\[\vert \varphi(z_0)\vert^2 \leq \sum_{\mathfrak{a}\in \mathrm{Cl}_D} \vert \varphi(g_{q_2} \cdot z_{\mathfrak{a}})\vert^2 = \frac{1}{\sharp\mathrm{Cl}_D} \sum_{\xi\in \widehat{\mathrm{Cl}_D}} \left\vert \sum_{\mathfrak{a}\in \mathrm{Cl}_D} \varphi(g_{q_2} \cdot z_{\mathfrak{a}})\xi(\mathfrak{a})\right\vert^2.\]
The result then follows once we have deduced the bound
\begin{equation}
	\left\vert \sum_{\mathfrak{a}\in \mathrm{Cl}_D} \varphi(g_{q_2} \cdot z_{\mathfrak{a}})\xi(\mathfrak{a})\right\vert^2 \ll_{D,q} \frac{L(\frac{1}{2},\pi\otimes \sigma_{\xi})}{L(1,\pi,\mathrm{ad})}\label{eq:toshow}
\end{equation}
from Waldspurger's formula.

Showing \eqref{eq:toshow} is best done ad\`{e}lically. Let $\xi_{\mathbb{A}}$ denote the Hecke character corresponding to $\xi$ and let $\varphi_{\mathbb{A}}\in \pi$ be the ad\`{e}lic lift of $\varphi$. We refer to \cite[Section~4.3]{HN} for details. Since $\varphi$ is assumed to be an oldform, the ad\`{e}lic lift $\varphi_{\mathbb{A}}$ corresponds to a pure tensor under the isomorphism $\pi\cong \otimes_v \pi_v$. We can now write the left hand side of \eqref{eq:toshow} as an ad\`{e}lic toric period integral
\[\sum_{\mathfrak{a}\in \mathrm{Cl}_D} \varphi(g_{q_2} \cdot z_{\mathfrak{a}})\xi(\mathfrak{a}) = C_D\cdot \int_{T(\mathbb{Q})Z(\mathbb{A})\backslash T(\mathbb{A})} \xi_{\mathbb{A}}(t)^{-1}\varphi_{\mathbb{A}}(tg) \, d\mu_T(t).\]
We refer to \cite[Sections~2.1 and~2.5]{BB} for details. In particular $T\subseteq \mathrm{GL}_2$ is an anisotropic torus arising from an optimal embedding $\mathcal{O}_{K} \hookrightarrow \mathrm{Mat}_{2\times 2}(\widehat{\mathbb{Z}})$ and $g\in \mathrm{GL}_2(\mathbb{A})$ is some matrix depending on this embedding and $g_{q_2}$. If $S = \{\infty\} \cup \{p\mid qD\}$, then the components of $g$ at $v\not\in S$ are trivial. Also, note that the constant $C_D\in \mathbb{R}_{>0}$ depends on the measure normalisations and can be given explicitly. However, since we do not keep track of the $D$-dependence, we may ignore this here with impunity.

We now apply Waldspurger's formula \cite[Proposition~7]{Wa}; see also \cite[(10.2)]{BBK} for a version with $\xi_{\mathbb{A}}$ trivial. We obtain
\[\left\vert \sum_{\mathfrak{a}\in \mathrm{Cl}_D} \varphi(g_{q_2} \cdot z_{\mathfrak{a}})\xi(\mathfrak{a}) \right\vert^2 = C_{D,S}'\cdot I_{S,D}(g \cdot \varphi_{\mathbb{A}},\xi_{\mathbb{A}})\cdot \frac{L^S(\frac{1}{2},\pi\otimes \sigma_{\xi})}{L^S(1,\pi,\mathrm{ad})}.\]
Here $C_{D,S}'\in \mathbb{R}_{>0}$ is a new constant dependent only on $D$ and $S$ arising from local measure normalisations at each element of $S$. Furthermore,  $I_{S,D}(g \cdot \varphi_{\mathbb{A}},\xi_{\mathbb{A}})$ is a product of local integrals. Since we are not interested on dependencies in $q$ and $D$ we can estimate $C_{D,S}'\ll_{q,D} 1$ and complete $\frac{L^S(\frac{1}{2},\pi\otimes \sigma_{\xi})}{L^S(1,\pi,\mathrm{ad})}$ to $\frac{L(\frac{1}{2},\pi\otimes \sigma_{\xi})}{L(1,\pi,\mathrm{ad})}$ by artificially inserting the missing local factors. Thus, since the $L$-values appearing on the right hand side of \eqref{eq:toshow} are nonnegative, it is sufficient to show that 
\[I_{S,D}(g \cdot \varphi_{\mathbb{A}},\xi_{\mathbb{A}}) \ll_{q,D} 1.\]
This can be achieved by estimating the local integrals trivially essentially following the proof of \cite[Lemma~10.3]{BBK}, or alternatively by appealing to the calculation of the exact values of these local integrals determined in \cite{MW}. We omit the details.
\end{proof}

\section{Proof of Theorem~\ref{th:4th_heegner}}\label{sec:main}

We are now in a position to prove Theorem~\ref{th:4th_heegner}.

\begin{proof}[Proof of Theorem~\ref{th:4th_heegner}]
Recall we have fixed $q\in \mathbb{N}$ and that $z_0\in \Gamma_0(q)\backslash \mathbb{H}$ is a Heegner point of fundamental discriminant $D<0$.

We start by applying Waldspurger's formula as presented in Lemma~\ref{lm:walds}. Together with the Cauchy--Schwarz inequality, we thereby obtain
\begin{align*}
	\sum_{\substack{\varphi\in \mathcal{B}(q)\\ T\leq t_{\varphi} \leq  2T}}\vert \varphi(z_0)\vert^4 &\ll_{D,q} \sum_{\substack{\pi \textrm{ cusp.\ aut.}\\ c(\pi)\mid q\\ T\leq t_{\pi}\leq 2T}} \left(\sum_{\xi\in \widehat{\mathrm{Cl}_D}}\frac{L(\frac{1}{2},\pi\otimes \sigma_{\xi})}{L(1,\pi,\mathrm{ad})}\right)^2 \\
	&\ll_{D,q} \max_{\xi\in \widehat{\mathrm{Cl}_D}}\sum_{\substack{\pi \textrm{ cusp.\ aut.}\\ c(\pi)\mid q\\ T\leq t_{\pi}\leq 2T}}\frac{L(\frac{1}{2},\pi\otimes \sigma_{\xi})^2}{L(1,\pi,\mathrm{ad})^2}.
\end{align*}
At this point, we invoke the bound \eqref{eq:lo_up_bounds} for $L(1,\pi,\mathrm{ad})$ and insert the approximate functional equation for $L(\frac{1}{2},\pi\otimes \sigma_{\xi})$ as stated in Lemma~\ref{lm:approx_fe}. We arrive at
\begin{equation}
	\sum_{\substack{\varphi\in \mathcal{B}(q)\\ T\leq t_{\varphi} \leq  2T}}\vert \varphi(z_0)\vert^4  \ll_{D,q,\epsilon}\max_{\xi\in \widehat{\mathrm{Cl}_D}}\max_{N\leq T^{2+\epsilon}}\frac{T^{\epsilon}}{N}\sum_{\substack{\pi \textrm{ cusp.\ aut.}\\ c(\pi)\mid q\\ T\leq t_{\pi}\leq 2T}}\frac{\vert \mathcal{S}_{\pi,\xi}(N)\vert^2}{L(1,\pi,\mathrm{ad})} + T^{2+\epsilon}.\label{eq:above} 
\end{equation}
After recalling the definition of $\mathcal{S}_{\pi,\xi}(N)$ from \eqref{eq:def_S}, we observe that the spectral large sieve as presented in Lemma~\ref{lm:spec_larg} is applicable. We find that
\[\sum_{\substack{\pi \textrm{ cusp.\ aut.}\\ c(\pi)\mid q\\ T\leq t_{\pi}\leq 2T}}\frac{\vert \mathcal{S}_{\pi,\xi}(N)\vert^2}{L(1,\pi,\mathrm{ad})} \ll_{D,q,\epsilon} (NT)^{\epsilon}(T^2+N)\sum_{n = 1}^{\infty} \left\vert \lambda_{\xi}(n) W\left(\frac{n}{N}\right)\right\vert^2.\]
Since $W(\cdot)$ is $1$-bounded and compactly supported and because $\lambda_{\xi}(n)\ll_{\epsilon} n^{\epsilon}$, we obtain 
\[\sum_{\substack{\pi \textrm{ cusp.\ aut.}\\ c(\pi)\mid q\\ T\leq t_{\pi}\leq 2T}}\frac{\vert \mathcal{S}_{\pi,\xi}(N)\vert^2}{L(1,\pi,\mathrm{ad})} \ll_{D,q,\epsilon} (NT)^{\epsilon}(NT^2+N^2).\]
Inserting this estimate in \eqref{eq:above} above yields the desired bound
\[\sum_{\substack{\varphi\in \mathcal{B}(q)\\ T\leq t_{\varphi} \leq  2T}}\vert \varphi(z_0)\vert^4  \ll_{D,q,\epsilon}T^{2+\epsilon}.\qedhere\]
\end{proof}

\section{Proof of Theorem~\ref{thm:twistedsecondmoment}}
\label{sec:pretrace}

Let $z_0 = \frac{-B + i\sqrt{|D|}}{2A}$, where $(A,B,C) \in \mathbb{Z}^3$ with $(A,B,C) = 1$, $D = B^2 - 4AC$ is a negative discriminant, and $A > 0$. Let $\ell \in \mathbb{N}$ and $T \geq 1$. We consider the spectral twisted second moment
\begin{equation}
\label{eqn:spectraltwistedsecondmoment}
\sum_{\varphi\in \mathcal{B}(1)} \lambda_{\varphi}(\ell) \cdot \vert \varphi(z_0)\vert^2 e^{-\frac{t_{\varphi}^2}{T^2}} + \frac{1}{4\pi} \int_{-\infty}^{\infty} \lambda(\ell,t) \cdot \left\vert E\left(z_0,\frac{1}{2} + it\right)\right\vert^2 e^{-\frac{t^2}{T^2}} \, dt.
\end{equation}

\begin{lemma}
\label{lem:pretrace}
The spectral twisted second moment \eqref{eqn:spectraltwistedsecondmoment} is equal to
\begin{equation}
\label{eqn:partialsummation}
-\frac{1}{\sqrt{\ell}} \int_{0}^{\infty} k'(\delta) \sharp\{\gamma \in \Gamma_{\ell} : u(\gamma z_0,z_0) \leq \delta\} \, d\delta - \frac{3}{\pi} e^{\frac{1}{4T^2}} \frac{1}{\sqrt{\ell}} \sum_{ad = \ell} d,
\end{equation}
where $k : [0,\infty) \to \mathbb{C}$ denotes the inverse Selberg--Harish-Chandra transform of $h(t) = e^{-t^2/T^2}$,
\[u(z,w) = \frac{|z - w|^2}{4\Im(z) \Im(w)}, \qquad \Gamma_{\ell} = \left\{\begin{pmatrix} a & b \\ c & d \end{pmatrix} : a,b,c,d \in \mathbb{Z}, \ ad - bc = \ell\right\}.\]
\end{lemma}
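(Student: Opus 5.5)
We apply the pretrace formula for $\mathrm{SL}_2(\mathbb{Z})$ to the Hecke-translated automorphic kernel, and then rewrite the resulting geometric side by partial summation.

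\textbf{Step 1: spectral expansion of the Hecke kernel.} Let $k$ be the inverse Selberg--Harish-Chandra transform of $h(t)=e^{-t^2/T^2}$, viewed as a function of $u(z,w)$. Form
\[
K_\ell(z,w)=\frac{1}{\sqrt{\ell}}\sum_{\gamma\in\Gamma_\ell} k(u(\gamma z,w)).
\]
This is the Hecke operator $T_\ell$, normalised so that $T_\ell\varphi=\lambda_\varphi(\ell)\varphi$, applied in one variable to the automorphic kernel $\sum_{\gamma\in\Gamma}k(u(\gamma z,w))$. Here we use that $\Gamma\backslash\Gamma_\ell$ is represented by the upper triangular matrices $\begin{pmatrix} a&b\\0&d\end{pmatrix}$ with $ad=\ell$ and $0\le b<d$.

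Each Hecke--Maa\ss{} form and each $E(\cdot,\frac12+it)$ is an eigenfunction of the integral operator with kernel $k$, with eigenvalue $h(t_\varphi)$ (respectively $h(t)$). They are also eigenfunctions of $T_\ell$, with eigenvalues $\lambda_\varphi(\ell)$ and $\lambda(\ell,t)$. The spectral expansion of $K_\ell(z,w)$ therefore consists of three parts:
\begin{itemize}
\item the cuspidal sum $\sum_\varphi \lambda_\varphi(\ell)h(t_\varphi)\varphi(z)\overline{\varphi(w)}$;
\item the Eisenstein integral with weight $\frac{1}{4\pi}$, in the normalisation where $\Gamma\backslash\mathbb{H}$ carries the probability measure;
\item the constant eigenfunction.
\end{itemize}
The constant function has spectral parameter $t=\frac{i}{2}$, so $h(\frac i2)=e^{1/(4T^2)}$. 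Its Hecke eigenvalue in this normalisation is $\ell^{-1/2}\sum_{ad=\ell}d$. In the probability normalisation it has norm one, so $|1|^2=1$; but the kernel normalisation with respect to $d\mu$ introduces the factor $\mathrm{vol}(\Gamma\backslash\mathbb{H})^{-1}=3/\pi$.

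Setting $z=w=z_0$, the cuspidal and Eisenstein parts are exactly \eqref{eqn:spectraltwistedsecondmoment}. Hence
\[
\eqref{eqn:spectraltwistedsecondmoment}=\frac{1}{\sqrt{\ell}}\sum_{\gamma\in\Gamma_\ell}k(u(\gamma z_0,z_0)) - \frac{3}{\pi}e^{\frac{1}{4T^2}}\frac{1}{\sqrt\ell}\sum_{ad=\ell}d .
\]

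\textbf{Step 2: partial summation.} Since $h$ is Gaussian, $k$ is smooth and decays rapidly as $\delta\to\infty$. We write
\[
k(u)=-\int_u^\infty k'(\delta)\,d\delta
\]
and interchange sum and integral. This gives
\[
\sum_{\gamma\in\Gamma_\ell} k(u(\gamma z_0,z_0)) = -\int_0^\infty k'(\delta)\,\sharp\{\gamma\in\Gamma_\ell:u(\gamma z_0,z_0)\le\delta\}\,d\delta .
\]
The interchange is justified by the polynomial growth of the counting function in $\delta$ (it is $\ll_\ell \delta$), together with the rapid decay of $k'$.

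\textbf{Main obstacle.} The delicate step is the bookkeeping of normalisations rather than any real analytic difficulty. Specifically, one must check:
\begin{itemize}
\item the constant $\frac{1}{4\pi}$ in front of the Eisenstein integral;
\item the factor $\frac{3}{\pi}$ coming from the measure normalisation;
\item that the Hecke normalisation $\ell^{-1/2}\sum_{\gamma\in\Gamma\backslash\Gamma_\ell}$ gives precisely $\lambda_\varphi(\ell)$ and $\lambda(\ell,t)=\sum_{ab=\ell}a^{it}b^{-it}$;
\item the eigenvalue $\ell^{-1/2}\sum_{ad=\ell}d$ on constants.
\end{itemize}
One must also confirm that the pointwise spectral expansion is valid at $z=w=z_0$; this follows from the absolute convergence guaranteed by the rapid decay of $h$.

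We would verify these constants by comparing with the standard pretrace formula in \cite[Theorem~7.4]{Iw} and with the Hecke operator conventions of \cite[Chapter~8]{Iw}.
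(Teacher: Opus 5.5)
Your proposal is correct and follows the paper's proof essentially step for step: apply the pretrace formula, let $T_\ell$ act on the automorphic kernel and unfold $\Gamma\backslash\Gamma_\ell$ against $\Gamma$ to get $\ell^{-1/2}\sum_{\gamma\in\Gamma_\ell}k(u(\gamma z_0,z_0))$, remove the constant-eigenfunction term $\frac{3}{\pi}e^{1/(4T^2)}\ell^{-1/2}\sum_{ad=\ell}d$, and finish by partial summation. On the normalisation you flagged, your account treats the constant and cuspidal terms inconsistently. The factor $\frac{3}{\pi}$ is simply $|u_0|^2$ for the $d\mu$-normalised constant eigenfunction, so for the cuspidal sum to appear with coefficient exactly $1$ the cusp forms must also be $d\mu$-normalised; with probability-normalised forms the cuspidal sum would carry the same factor $\frac{3}{\pi}$. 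The $d\mu$ convention is the one the paper tacitly uses in this section, and the discrepancy is harmless for the subsequent bounds.
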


\begin{proof}
By the pretrace formula \cite[Theorem~7.4]{Iw}, the spectral twisted second moment \eqref{eqn:spectraltwistedsecondmoment} is equal to
\begin{equation}
\label{eqn:Tellku}
T_{\ell}\big\vert_{z = z_0} \sum_{\gamma \in \Gamma} k(u(\gamma z_0,z)) - \frac{3}{\pi} e^{\frac{1}{4T^2}} T_{\ell} 1,
\end{equation}
where $T_{\ell}$ denotes the $\ell$-th Hecke operator and the second term arises from the contribution of the constant function in the spectral decomposition of $L^2(\Gamma \backslash \mathbb{H})$. Since the $\ell$-th Hecke operator acts on $\Gamma$-invariant functions $f : \mathbb{H} \to \mathbb{C}$ via
\[(T_{\ell} f)(z) = \frac{1}{\sqrt{\ell}} \sum_{\gamma \in \Gamma \backslash \Gamma_{\ell}} f(\gamma z) = \frac{1}{\sqrt{\ell}} \sum_{ad = n} \sum_{b \hspace{-.25cm} \pmod{d}} f\left(\frac{az + b}{d}\right),\]
we may unfold in order to see that \eqref{eqn:Tellku} is equal to
\[\frac{1}{\sqrt{\ell}} \sum_{\gamma \in \Gamma_{\ell}} k(u(\gamma z_0,z_0)) - \frac{3}{\pi} e^{\frac{1}{4T^2}} \frac{1}{\sqrt{\ell}} \sum_{ad = \ell} d.\]
The desired identity subsequently follows via partial summation.
\end{proof}

To estimate \eqref{eqn:partialsummation}, we use the following two results.

\begin{prop}
\label{prop:k'bound}
Let $k : [0,\infty) \to \mathbb{C}$ denote the inverse Selberg--Harish-Chandra transform of $h(t) = e^{-t^2/T^2}$. Then
\begin{equation}
\label{eqn:k'bound}
k'(\delta) \ll \begin{dcases*}
T^4 & for $\delta \leq \sinh^2\frac{1}{T}$,	\\
T^4 e^{-T^2 (\arsinh \sqrt{\delta})^2} & for $\sinh^2\frac{1}{T} \leq \delta \leq 1$,	\\
T^4 e^{-T^2 (\arsinh \sqrt{\delta})^2} \frac{(\arsinh \sqrt{\delta})^{3/2}}{\delta^{3/2}} & for $\delta \geq 1$.
\end{dcases*}
\end{equation}
\end{prop}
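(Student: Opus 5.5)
We work directly with the explicit inversion formula for the Selberg--Harish-Chandra transform. In the normalisation of \cite[(1.62)]{Iw}, with $g(r)=\frac{1}{2\pi}\int h(t)e^{-irt}\,dt = \frac{T}{2\sqrt{\pi}}e^{-T^2r^2/4}$ and $q(v)=\frac12 g(2\log(\sqrt{v+1}+\sqrt v))$, we have
\[k(\delta) = -\frac{1}{\pi}\int_\delta^\infty \frac{dq(v)}{\sqrt{v-\delta}}.\]
The substitution $v=\sinh^2(r/2)$ turns this into a standard representation. The cleanest route to $k'$ is to differentiate under the integral after first shifting the singularity: write $v=\delta+w$, so that
\[k(\delta)=-\frac1\pi\int_0^\infty q'(\delta+w)\,w^{-1/2}\,dw,\qquad k'(\delta)=-\frac1\pi\int_0^\infty q''(\delta+w)\,w^{-1/2}\,dw.\]
Since $\arsinh\sqrt v = \frac12\cdot 2\log(\sqrt{v+1}+\sqrt v)$, we get $q(v)=\frac{T}{4\sqrt\pi}\exp(-T^2(\arsinh\sqrt v)^2)$. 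Set $\rho(v)=\arsinh\sqrt v$. Then $\rho'(v)=\frac{1}{2\sqrt{v(1+v)}}$, and $q''$ is a combination of $T\cdot T^4\rho^2\rho'^2$, $T\cdot T^2\rho'^2$ and $T\cdot T^2\rho\rho''$ times $e^{-T^2\rho^2}$, where $\rho\rho''$ is bounded near $0$ and decays like $\rho/v^{2}$ at infinity. Because $\rho(v)\approx\sqrt v$ for small $v$, the factor $\rho'^2\sim 1/(4v)$ is harmless once multiplied by $T^2\rho^2\sim T^2 v$. So the plan is to bound $|q''(v)|\ll T^3(1+T^2\rho^2)\rho'(v)^2\,\max(1,\rho^2/\ldots)e^{-T^2\rho(v)^2}$ and integrate against $w^{-1/2}$.

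The cases then go as follows.

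For $\delta\le \sinh^2(1/T)$, we bound $k'(\delta)$ by its value scale at $\delta=0$. In the region $v\lesssim 1$ we have $|q''(v)|\ll T^5 e^{-cT^2 v}$, using $T^2\rho^2\rho'^2\asymp T^2$ and the fact that $\rho\rho''$ is bounded. Hence $\int_0^\infty |q''(\delta+w)|w^{-1/2}\,dw\ll T^5\int_0^\infty e^{-cT^2w}w^{-1/2}\,dw\ll T^4$. The tail $v\ge1$ is exponentially small.

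For $\sinh^2(1/T)\le\delta\le1$, we use that $\rho(\delta+w)^2\ge\rho(\delta)^2+c\min(w,1)$; this follows from monotonicity and convexity-type estimates for $\rho^2$ on $[0,2]$. That extracts $e^{-T^2\rho(\delta)^2}$, and the remaining polynomial factor is controlled as before. The factor $(1+T^2\rho^2)$ at $v=\delta$ could contribute an extra $T^2\delta$, but this is absorbed by slightly weakening the exponent: $T^2\delta e^{-T^2\rho^2}\ll e^{-(1-\eta)T^2\rho^2}$. I expect, though, that the stated bound holds exactly, because $w^{-1/2}$ integration gains $(T^2)^{-1/2}$ only near the scale $w\sim 1/T^2$, whereas the exponential's decay in $w$ has rate $T^2\rho\rho'\sim T^2$. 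So the $w$-integral over the scale $1/T^2$ gives $T^5\cdot T^{-1}$, times the ratio $\rho^2\rho'^2T^2/T^2\lesssim1$. The main obstacle is this bookkeeping: showing that the rate of decay in $w$ is at least $\asymp T^2\rho(\delta)\rho'(\delta)$, and that the prefactor $T^5\rho^2\rho'^2$ combined with $\int e^{-T^2\rho\rho' w}w^{-1/2}\,dw=(T^2\rho\rho')^{-1/2}$ yields $T^4(\rho\rho')^{3/2}\cdot\rho^{1/2}/\rho'^{1/2}$.

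For $\delta\ge1$, we have $\rho\rho'\asymp \rho(\delta)/\delta$. Inserting this into the same computation gives
\[T^5\rho^2\rho'^2\,(T^2\rho\rho')^{-1/2}= T^4\rho^{3/2}\rho'^{3/2}\asymp T^4\frac{(\arsinh\sqrt\delta)^{3/2}}{\delta^{3/2}},\]
which is exactly the third case. In the middle range the same expression is $\asymp T^4$ since $\rho\rho'\asymp1$, which is consistent with that case.

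Rigorously, in each case we linearise $\rho(\delta+w)^2\ge\rho(\delta)^2+2\rho(\delta)\rho'(\delta)\min(w,\delta)$ using concavity of $\rho^2$ in $\log v$, and we bound the region $w>\delta$ crudely, where the exponential is already much smaller.
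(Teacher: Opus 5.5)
Your route works and starts from the same integral as the paper, but estimates it differently. Substitute $v=\sinh^2\frac r2$ in your integral, so that $\rho=\frac r2$, $P:=\rho^2$, $P'=\frac{r}{\sinh r}$ and $-P''=\frac{2(r\coth r-1)}{\sinh^2 r}$. Then $q''(v)\,dv$ becomes a constant multiple of $\frac{(T^2r^2+2r\coth r-2)e^{-T^2r^2/4}}{\sinh r}\,dr$, which is the integrand of the paper's formula for $-k'(u)$; the paper writes $v$ for $r$ and $u$ for $\delta$. The paper splits the range at $u_0$. On $[u,u_0]$ it bounds the amplitude by its value at the left endpoint and integrates $\frac{d}{dr}\sqrt{\sinh^2\frac r2-u}$ exactly. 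On $[u_0,\infty)$ it bounds the kernel by $(u_0-u)^{-1/2}$ and integrates an exact derivative, essentially of $q'$. So it needs only monotonicity of an explicit amplitude: no lower bound for $P(\delta+w)-P(\delta)$ and no tail estimate. The choice $u_0-u\asymp (T^2P'(u))^{-1}$ then gives one explicit inequality, essentially $T^4P'(u)^{3/2}e^{-T^2P(u)}$, covering all three ranges at once. Your Laplace-type linearisation reaches the same quantity $T^4(\rho\rho')^{3/2}e^{-T^2\rho^2}$ and makes the scales transparent. The price is several auxiliary facts about $\rho$, three of which are misstated.

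\emph{The claim that $\rho\rho''$ is bounded near $0$ is false.} In fact $\rho\rho''\sim-\frac{1}{4v}$, which cancels $\rho'^2\sim\frac{1}{4v}$. If you bound the two $T^3$-terms of $q''$ separately, you only get $O(T^3\delta^{-1/2})$. This exceeds $T^4$ for $\delta<T^{-2}$ and blows up as $\delta\to0$, so case 1 fails as written. Group the terms as $\rho'^2+\rho\rho''=\frac12 P''$, which is bounded because $P=v-\frac{v^2}{3}+\cdots$ is analytic at $0$. This gives $q''=\frac{T^3}{4\sqrt\pi}(T^2P'^2-P'')e^{-T^2P}$, with both terms nonnegative.

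\emph{Your linearisation inequality is false as stated.} Since $P'(v)=\frac{\arsinh\sqrt v}{\sqrt{v(1+v)}}$ is decreasing, $P$ is concave in $v$, and it is convex (not concave) in $\log v$. Hence $P(\delta+w)<P(\delta)+P'(\delta)w$ for $w>0$, so the bound with coefficient exactly $2\rho\rho'(\delta)$ cannot hold. Convexity in $\log v$ gives instead $P(\delta+w)\ge P(\delta)+\delta P'(\delta)\log(1+\frac{w}{\delta})\ge P(\delta)+(\log 2)P'(\delta)w$ for $0\le w\le\delta$. That is all you need.

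\emph{There is no extra factor $T^2\delta$ in case 2.} Indeed $T^5\rho^2\rho'^2=\frac{T^5}{4}P'^2\asymp T^5$ uniformly on $[0,2]$. The fallback $e^{-(1-\eta)T^2\rho^2}$ would not prove the proposition as stated, and it is unnecessary. In this range use $P(\delta+w)\ge P(\delta)+c\min(w,1)$ rather than $\min(w,\delta)$. For $\delta\approx T^{-2}$ the part $w>\delta$ is not exponentially small, since $T^2(P(2\delta)-P(\delta))\asymp 1$. With these repairs your argument proves the proposition.
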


\begin{prop}
\label{prop:Gammaellubound}
Let $z_0$ be as above. For $\delta \geq 0$,
\begin{equation}
\label{eqn:Gammaellubound}
\sharp\{\gamma \in \Gamma_{\ell} : u(\gamma z_0,z_0) \leq \delta\} \ll_{D,\epsilon} \begin{dcases*}
\ell^{\epsilon} & if $0 \leq \delta \leq \frac{1}{\ell}$,	\\
(\ell \delta)^{1 + \epsilon} & if $\delta \geq \frac{1}{\ell}$.
\end{dcases*}
\end{equation}
\end{prop}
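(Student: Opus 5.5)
The plan is to turn the count into a lattice-point count of integral binary quadratic forms with bounded coefficients. For $\gamma = \begin{pmatrix} a & b \\ c & d\end{pmatrix}\in\Gamma_\ell$, use the standard identity
\[
u(\gamma z,z)=\frac{|cz^2+(d-a)z-b|^2}{4\ell\,\Im(z)^2}.
\]
At $z=z_0$, write $z_0$ as a root of $Q_0(x)=Ax^2+Bx+C$. The quantity $cz_0^2+(d-a)z_0-b$ is then a linear form in $(c,d-a,b)$ evaluated at a CM point.

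Concretely, reduce $cz_0^2$ modulo $Q_0$, i.e.\ use $Az_0^2=-Bz_0-C$. This gives
\[
A\bigl(cz_0^2+(d-a)z_0-b\bigr)=\alpha z_0+\beta,
\]
with $\alpha=A(d-a)-Bc$ and $\beta=-Cc-Ab$ integers. Since $z_0=(-B+i\sqrt{|D|})/(2A)$, we have
\[
|\alpha z_0+\beta|^2=\frac{(2A\beta-B\alpha)^2+|D|\alpha^2}{4A^2}.
\]
Therefore $u(\gamma z_0,z_0)\le\delta$ forces $|\alpha|\ll_D\sqrt{\ell\delta}$ and $|2A\beta-B\alpha|\ll_D\sqrt{\ell\delta}$. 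So there are $O_D(1+\ell\delta)$ admissible pairs $(\alpha,\beta)$.

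The second step counts the $\gamma$ lying over each pair $(\alpha,\beta)$. The map $(a,b,c,d)\mapsto(\alpha,\beta)$ kills the two-dimensional lattice of $\gamma$ commuting with the embedded order, namely the matrices $xI+yM_0$ with $M_0=\begin{pmatrix} -B & -2C\\ 2A & B\end{pmatrix}$. Any $\gamma$ lying over $(\alpha,\beta)$ has the form $\gamma=\gamma_1+xI+yM_0$ with $x,y$ rational of bounded denominator, and the determinant condition reads
\[
\det\gamma=\mathrm{Nm}(x+y\sqrt{D}\,\text{-part})+(\text{terms in }\alpha,\beta)=\ell.
\]
More cleanly, I would invert the linear system. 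Given $(\alpha,\beta)$, the quantities $c$, $b$ and $d-a$ are determined up to the one-parameter family coming from the kernel direction. The remaining freedom is $(a+d,\;\text{kernel parameter})$, and the equation $ad-bc=\ell$ becomes a norm equation
\[
s^2+|D|r^2=4\ell(1+O(\delta))-(\text{known})\ll_D\ell(1+\delta),
\]
more precisely of the form $s^2+|D|r^2=N$ with $N\ll\ell(1+\delta)$ fixed by $(\alpha,\beta)$. This is essentially the identity $\mathrm{tr}(\gamma)^2-4\det\gamma$ combined with the decomposition of $\gamma$ into its "$z_0$-commuting" and "anti-commuting" parts.

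The number of representations of $N$ by the binary form $s^2+|D|r^2$ is $\ll_{D,\epsilon}N^{\epsilon}$ by the divisor bound in $\mathbb{Q}(\sqrt{D})$. Multiplying gives a count of $\ll_{D,\epsilon}(1+\ell\delta)(\ell(1+\delta))^{\epsilon}$, which is the claimed bound:
\begin{itemize}
\item $\ell^{\epsilon}$ for $\delta\le1/\ell$ (here only $\alpha=\beta=0$ survives up to $O(1)$ choices);
\item $(\ell\delta)^{1+\epsilon}$ for $\delta\ge1/\ell$, since $\ell^{\epsilon}\delta^{\epsilon}\le(\ell\delta)^{\epsilon}\ell^{\epsilon}$ and $\ell^{2\epsilon}\le(\ell\delta)^{2\epsilon}\cdot\delta^{-2\epsilon}$.
\end{itemize}
The $\ell^{\epsilon}$ factor has to be absorbed carefully when $\delta$ is small. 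For $\delta\ge1/\ell$ I would just write the bound as $\ell^{\epsilon}(\ell\delta)^{1+\epsilon}$ and rename $\epsilon$. If needed, I would note that in the application only $\ell^{\epsilon}$ losses matter.

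The main obstacle is setting up the second step cleanly: showing that for fixed $(\alpha,\beta)$ the fibre really is parametrised by the solutions of a single positive-definite binary quadratic equation with right-hand side $\ll_D\ell(1+\delta)$. The cleanest route is to use the quaternion/matrix identity
\[
\gamma=\tfrac12\bigl(\mathrm{tr}\gamma\,I\bigr)+\gamma_0,
\]
and to split $\gamma_0$ into components parallel and orthogonal to $M_0$ under the trace form. The orthogonal part is exactly what $(\alpha,\beta)$ records, and $\det\gamma=\ell$ then gives $(\mathrm{tr}\gamma)^2+|D|y^2=4\ell-(\text{positive definite in }\alpha,\beta)$. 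I would try this decomposition first, tracking denominators bounded in terms of $A$, $B$, $C$, hence in terms of $D$ and $z_0$.
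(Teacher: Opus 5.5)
Your argument is correct and is essentially the paper's proof. Your $(\alpha,\beta)$ determine the paper's coordinates via $(c',d')=(2C\alpha-B\beta,\,B\alpha-2A\beta)$, and your plane $\mathrm{span}(I,M_0)$ is the paper's $(a',b')$-block; so both arguments count lattice points in an ellipse of area $\asymp_D \ell\delta$, multiplied by the number of representations of an integer $\ll_D \ell(1+\delta)$ by a positive definite binary form.

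There is one sign slip. $\det$ is negative definite on the anti-commuting part, so the identity should read $(\mathrm{tr}\gamma)^2+4|D|y^2=4\ell\bigl(1+u(\gamma z_0,z_0)\bigr)$, with a plus sign rather than a minus. This does not affect the bound, and as in the paper's own proof, what you actually obtain for $\delta\ge 1/\ell$ is $\ell^{\epsilon}(\ell\delta)^{1+\epsilon}$.
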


\begin{rem}
For \emph{arbitrary} $z_0 \in \Hb$, the weaker bound
\[\sharp\{\gamma \in \Gamma_{\ell} : u(\gamma z_0,z_0) \leq \delta\} \ll_{z_0,\epsilon} \begin{dcases*}
\ell^{\epsilon} & if $0 \leq \delta \leq \frac{1}{\ell^4}$,	\\
\ell^{1 + \epsilon} \delta^{1/4} & if $\frac{1}{\ell^4} \leq \delta \leq 1$,	\\
(\ell \delta)^{1 + \epsilon} & if $\delta \geq 1$
\end{dcases*}\]
is shown in \cite[Appendix 1]{IS}.
\end{rem}

With these in hand, we may complete the proof of Theorem~\ref{thm:twistedsecondmoment}.

\begin{proof}[Proof of Theorem~\ref{thm:twistedsecondmoment}]
By Lemma~\ref{lem:pretrace}, the spectral twisted second moment \eqref{eqn:spectraltwistedsecondmoment} is equal to
\[-\frac{1}{\sqrt{\ell}} \int_{0}^{\infty} k'(\delta) \sharp\{\gamma \in \Gamma_{\ell} : u(\gamma z_0,z_0) \leq \delta\} \, d\delta - \frac{3}{\pi} e^{\frac{1}{4T^2}} \frac{1}{\sqrt{\ell}} \sum_{ad = \ell} d.\]
Inputting the bounds \eqref{eqn:k'bound} and \eqref{eqn:Gammaellubound} for the former term and standard divisor bounds for the latter term, we see that this is
\[\ll_{D,\epsilon} \begin{dcases*}
\frac{T^{2 + \epsilon}}{\sqrt{\ell}} & if $\ell \leq T^2$,	\\
\ell^{\frac{1}{2} + \epsilon} & if $\ell \geq T^2$,
\end{dcases*}\]
as desired.
\end{proof}

\subsection{Proof of Proposition~\ref{prop:k'bound}}

The proof of Proposition~\ref{prop:k'bound} is based on a similar calculation for $k(u)$ given in \cite[Lemma~5.1]{CR}.

\begin{proof}[Proof of Proposition~\ref{prop:k'bound}]
For $h(t) = e^{-t^2/T^2}$, we have via the identity \cite[(1.63)]{Iw} for $k(u)$ and integration by parts that
\begin{align*}
k(u) & = \frac{1}{8\pi^2} \int_{2\arsinh \sqrt{u}}^{\infty} \frac{1}{\sqrt{\sinh^2 \frac{v}{2} - u}} \int_{-\infty}^{\infty} e^{-\frac{t^2}{T^2}} t \sin(tv) \, dt \, dv	\\
& = \frac{T^3}{16\pi^{3/2}} \int_{2 \arsinh \sqrt{u}}^{\infty} \frac{v e^{-\frac{T^2 v^2}{4}}}{\sqrt{\sinh^2 \frac{v}{2} - u}} \, dv	\\
& = \frac{T^3}{8\pi^{3/2}} \int_{2 \arsinh \sqrt{u}}^{\infty} \sqrt{\sinh^2 \frac{v}{2} - u} \frac{(T^2 v^2 + 2v \coth v - 2) e^{-\frac{T^2 v^2}{4}}}{\sinh v} \, dv.
\end{align*}
It follows that
\begin{align*}
-k'(u) & = \frac{T^3}{16\pi^{3/2}} \int_{2 \arsinh \sqrt{u}}^{\infty} \frac{1}{\sqrt{\sinh^2 \frac{v}{2} - u}} \frac{(T^2 v^2 + 2v \coth v - 2) e^{-\frac{T^2 v^2}{4}}}{\sinh v} \, dv	\\
& = K_1 + K_2,
\end{align*}
where for $u_0 \in [u,\infty)$ to be chosen,
\begin{align*}
K_1 & = \frac{T^3}{4\pi^{3/2}} \int_{2 \arsinh \sqrt{u}}^{2 \arsinh \sqrt{u_0}} \frac{(T^2 v^2 + 2v \coth v - 2) e^{-\frac{T^2 v^2}{4}}}{\sinh^2 v} \frac{d}{dv} \sqrt{\sinh^2 \frac{v}{2} - u} \, dv,	\\
K_2 & = -\frac{T^3}{32\pi^{3/2}} \int_{2 \arsinh \sqrt{u_0}}^{\infty} \frac{1}{\sqrt{\sinh^2 \frac{v}{2} - u}} \frac{d}{dv} \frac{v e^{-\frac{T^2 v^2}{4}}}{\sinh v} \, dv
\end{align*}
For the former integral,
\begin{align*}
K_1 & \leq \frac{T^3}{2\pi^{3/2} u(u + 1)} \left(T^2 (\arsinh\sqrt{u})^2 + \arsinh\sqrt{u} \frac{2u + 1}{\sqrt{u(u + 1)}} - 1\right)	\\
& \hspace{3cm} \times e^{-T^2(\arsinh\sqrt{u})^2} \int_{2\arsinh\sqrt{u}}^{2 \arsinh \sqrt{u_0}} \frac{d}{dv} \sqrt{\sinh^2 \frac{v}{2} - u} \, dv	\\
& = \frac{T^3}{2\pi^{3/2}u(u + 1)} \left(T^2 (\arsinh\sqrt{u})^2 + \arsinh\sqrt{u} \frac{2u + 1}{\sqrt{u(u + 1)}} - 1\right)    \\
& \hspace{3cm} \times e^{-T^2(\arsinh\sqrt{u})^2} \sqrt{u_0 - u}.
\end{align*}
Similarly, for the latter,
\begin{align*}
K_2 & \leq -\frac{T^3}{32\pi^{3/2}} \frac{1}{\sqrt{u_0 - u}} \int_{2\arsinh\sqrt{u}}^{\infty} \frac{d}{dv} \frac{v e^{-\frac{T^2 v^2}{4}}}{\sinh v} \, dv	\\
& = \frac{T^3}{8\pi^{3/2}} \frac{e^{-T^2(\arsinh\sqrt{u})^2} \arsinh\sqrt{u}}{\sqrt{u(u + 1)(u_0 - u)}}.
\end{align*}
We choose
\[u_0 = u + \frac{\sqrt{u(u + 1)} \arsinh\sqrt{u}}{4\left(T^2 (\arsinh\sqrt{u})^2 + \arsinh\sqrt{u} \frac{2u + 1}{\sqrt{u(u + 1)}} - 1\right)},\]
so that
\begin{multline*}
-k'(u) \leq \frac{T^3}{8\pi^{3/2}} \frac{e^{-T^2(\arsinh\sqrt{u})^2}}{(u(u + 1))^{3/4}} \\
\times\left(\left(T^2 (\arsinh\sqrt{u})^2 + \arsinh\sqrt{u} \frac{2u + 1}{\sqrt{u(u + 1)}} - 1\right)\arsinh\sqrt{u}\right)^{\frac{1}{2}}.
\end{multline*}
This implies the desired bounds for $k'(u)$.
\end{proof}

\subsection{Proof of Proposition~\ref{prop:Gammaellubound}}

We begin by expressing the quantity $u(\gamma z_0,z_0)$ in terms of quaternary quadratic forms in the entries of $\gamma$.

\begin{lemma}
\label{lem:uquadform}
Let $z_0 = \frac{-B + i\sqrt{|D|}}{2A}$, where $(A,B,C) \in \mathbb{Z}^3$ with $(A,B,C) = 1$, $D = B^2 - 4AC$ is a negative discriminant, and $A > 0$. For $g = \begin{psmallmatrix} a & b \\ c & d \end{psmallmatrix} \in \GL_2^+(\mathbb{R})$, we have that
\[u(g z_0,z_0) = \frac{Q_1(a,b,c,d)}{|D|Q_2(a,b,c,d)},\]
where $Q_1$ denotes the positive semi-definite integral quaternary quadratic form
\begin{multline*}
Q_1(a,b,c,d) = AC a^2 - AB ab + BC ac - 2ACad	\\
+ A^2 b^2 + (2AC - B^2)bc + ABbd + C^2 c^2 - BC cd + ACd^2
\end{multline*}
and $Q_2$ denotes the isotropic integral quaternary quadratic form $Q_2(a,b,c,d) = ad - bc$.
\end{lemma}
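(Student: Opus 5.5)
The plan is a direct computation from the definitions, using only that $z_0$ is a root of $Az^2+Bz+C$. Write $z_0 = x_0 + iy_0$ with $x_0 = -\frac{B}{2A}$ and $y_0 = \frac{\sqrt{|D|}}{2A}$. We will use the three identities
\[Az_0^2+Bz_0+C=0,\qquad |z_0|^2=\frac{C}{A},\qquad y_0^2=\frac{|D|}{4A^2},\]
all of which follow from $z_0$ being a root of $Az^2+Bz+C$ with $D=B^2-4AC<0$.

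\emph{Step 1: reduce to a single modulus.} For $g=\begin{psmallmatrix} a&b\\c&d\end{psmallmatrix}\in\GL_2^+(\mathbb{R})$ we have $\Im(gz_0)=\frac{(ad-bc)\,y_0}{|cz_0+d|^2}$. Substituting into the definition of $u$ gives
\[u(gz_0,z_0)=\frac{|az_0+b-z_0(cz_0+d)|^2}{4(ad-bc)\,y_0^2}=\frac{\bigl|-cz_0^2+(a-d)z_0+b\bigr|^2}{4Q_2(a,b,c,d)\,y_0^2}.\]

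\emph{Step 2: linearise the numerator.} Replace $z_0^2$ by $-(Bz_0+C)/A$. This turns the expression inside the modulus into $(Xz_0+Y)/A$, where
\[X=cB+A(a-d),\qquad Y=cC+Ab.\]
Since $X,Y$ are real,
\[|Xz_0+Y|^2=X^2|z_0|^2+2XY\Re z_0+Y^2=\frac{CX^2-BXY+AY^2}{A}.\]
Combining this with $4y_0^2=|D|/A^2$, the powers of $A$ collapse and we get
\[u(gz_0,z_0)=\frac{CX^2-BXY+AY^2}{A\,|D|\,Q_2(a,b,c,d)}.\]

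\emph{Step 3: identify $Q_1$.} It remains to check that $CX^2-BXY+AY^2=A\,Q_1(a,b,c,d)$. Expanding, the only term not visibly divisible by $A$ is $B^2Cc^2$. It appears once with a plus sign (from $CX^2$) and once with a minus sign (from $-BXY$), so it cancels. Dividing what remains by $A$ and collecting monomials should give exactly the listed coefficients of $Q_1$:
\begin{itemize}
\item $AC(a-d)^2$ gives $ACa^2-2ACad+ACd^2$;
\item $BCc(a-d)$ gives $BCac-BCcd$;
\item $-ABb(a-d)$ gives $-ABab+ABbd$;
\item the $bc$ terms combine to $(2AC-B^2)bc$;
\item the remaining squares are $A^2b^2$ and $C^2c^2$.
\end{itemize}
This bookkeeping is the only place where an error could creep in, so it is the step to do carefully, checking each of the ten monomials.

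\emph{Step 4: positivity.} The identity $A\,Q_1=A\,|Xz_0+Y|^2$ shows $Q_1=|Xz_0+Y|^2\ge 0$ for all real $(a,b,c,d)$, so $Q_1$ is positive semi-definite. Integrality of its coefficients is visible from the formula. Finally, $Q_2=ad-bc$ is isotropic, since for example $Q_2(1,0,0,0)=0$.
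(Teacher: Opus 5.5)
Your proposal is correct. With $X=A(a-d)+Bc$ and $Y=Ab+Cc$, the expression $\frac{1}{A}(CX^2-BXY+AY^2)$ reproduces all ten coefficients of $Q_1$ exactly.

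Your route differs from the paper's in how the algebra is organised. The paper computes $gz_0$ explicitly over the common denominator $2(Ad^2-Bcd+Cc^2)=2A|cz_0+d|^2$. It then writes $u$ as the sum of the squared differences of real and imaginary parts divided by $4\Im(gz_0)\Im(z_0)$, and simplifies. That simplification silently requires the factor $Ad^2-Bcd+Cc^2$ to cancel. You instead cancel $|cz_0+d|^2$ at the outset and use $Az_0^2+Bz_0+C=0$ to turn the numerator into the norm $|Xz_0+Y|^2$ of a linear expression, so no rational simplification is needed.

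Your version also gives something the paper's proof of this lemma leaves implicit. The identity $Q_1=|Xz_0+Y|^2$ exhibits $Q_1$ as the pullback of the positive definite binary form $CX^2-BXY+AY^2$ under a linear map. This proves the asserted positive semi-definiteness, whereas the formula for $u$ by itself only yields $Q_1\ge 0$ where $ad-bc>0$.

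It also anticipates the paper's next lemma. One checks that the paper's new coordinates there are $c'=BY-2CX$ and $d'=2AY-BX$. The matrix $\begin{psmallmatrix} B & -2C\\ 2A & -B\end{psmallmatrix}$ scales $Q_3$ by $|D|$. Hence $Q_3(c',d')=|D|\,Q_3(Y,X)=A|D|\,Q_1$, which is exactly the block-diagonal form found there.
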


\begin{proof}
We have that
\[g z_0 = \frac{a \left(\frac{-B + i\sqrt{|D|}}{2A}\right) + b}{c \left(\frac{-B + i\sqrt{|D|}}{2A}\right) + d} = \frac{2Abd - B(ad + bc) + 2Cac + i\sqrt{|D|}(ad - bc)}{2(Ad^2 - Bcd + Cc^2)}.\]
Thus
\begin{align*}
u(g z_0,z) & = \frac{|g z_0 - z_0|^2}{4\Im(g z_0) \Im(z_0)}	\\
& = \frac{\left(\frac{2Abd - B(ad + bc) + 2Cac}{2(Ad^2 - Bcd + Cc^2)} + \frac{B}{2A}\right)^2 + \left(\frac{ad - bc}{2(Ad^2 - Bcd + Cc^2)} - \frac{1}{2A}\right)^2 (4AC - B^2)}{\frac{|D|(ad - bc)}{A (Ad^2 - Bcd + Cc^2)}}	\\
& = \frac{ACa^2 - ABab + BCac - 2ACad}{|D|(ad - bc)}	\\
& \qquad + \frac{A^2 b^2 + (2AC - B^2)bc + ABbd + C^2 c^2 - BCcd + ACd^2}{|D|(ad - bc)}.
\qedhere
\end{align*}
\end{proof}

\begin{cor}
With notation as in Lemma~\ref{lem:uquadform}, we have that for $\delta > 0$,
\begin{multline*}
\sharp\{\gamma \in \Gamma_{\ell} : u(\gamma z_0,z_0) \leq \delta\}	\\
= \sum_{0 \leq n \leq |D|\ell \delta} \sharp\left\{(a,b,c,d) \in \mathbb{Z}^4 : Q_1(a,b,c,d) = n, \ Q_2(a,b,c,d) = \ell\right\}.
\end{multline*}
\end{cor}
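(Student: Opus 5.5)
This is an immediate consequence of Lemma~\ref{lem:uquadform}, and I would prove it as follows. An element $\gamma \in \Gamma_{\ell}$ is the same thing as a quadruple $(a,b,c,d) \in \mathbb{Z}^4$ with $Q_2(a,b,c,d) = ad - bc = \ell$. Since $\ell \geq 1$, every such $\gamma$ lies in $\GL_2^+(\mathbb{R})$, so Lemma~\ref{lem:uquadform} applies and gives
\[
u(\gamma z_0,z_0) = \frac{Q_1(a,b,c,d)}{|D|\ell}.
\]
Hence the condition $u(\gamma z_0,z_0) \leq \delta$ is equivalent to $Q_1(a,b,c,d) \leq |D|\ell\delta$.

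Next, $Q_1$ has integer coefficients, so $Q_1(a,b,c,d)$ is an integer. Lemma~\ref{lem:uquadform} also states that $Q_1$ is positive semi-definite; directly, $u \geq 0$ and $|D|\ell > 0$ force $Q_1(a,b,c,d) \geq 0$ on the relevant set.

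Therefore I would partition $\{\gamma \in \Gamma_{\ell} : u(\gamma z_0,z_0) \leq \delta\}$ according to the value $n = Q_1(a,b,c,d)$, which ranges over the integers with $0 \leq n \leq |D|\ell\delta$. This gives exactly the stated sum. Each fibre is finite because $\Gamma_\ell$ acts on $\mathbb{H}$ with finitely many elements moving $z_0$ a bounded distance, so the sum is a finite identity.

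There is no real obstacle here. The only points to check are that $\det\gamma = \ell > 0$, so the lemma applies, and that the matrix-to-quadruple correspondence is a bijection, which holds by definition of $\Gamma_{\ell}$.
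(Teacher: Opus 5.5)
Your argument is correct and is exactly the reasoning the paper intends: it states the corollary without proof as an immediate consequence of Lemma~\ref{lem:uquadform}, using $Q_2(a,b,c,d)=\det\gamma=\ell>0$, the integrality and nonnegativity of $Q_1$, and a partition according to the value $n=Q_1(a,b,c,d)$. Your finiteness remark is not needed for the identity, and $\Gamma_{\ell}$ is not a group but a finite union of cosets $\Gamma\gamma_i$; neither point affects the proof.
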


We give an upper bound for this cardinality by simultaneously block diagonalising this pair of quaternary quadratic forms. The resulting $2 \times 2$ symmetric matrices are associated to the positive definite integral binary quadratic form $Q_3(a,b) = Aa^2 - Bab + Cb^2$. At this stage, it is beneficial to associate symmetric matrices $A_1,A_2,A_3$ to the quadratic forms $Q_1,Q_2,Q_3$, namely
\begin{gather*}
A_1 = \begin{pmatrix} 2AC & -AB & BC & -2AC \\ -AB & 2A^2 & 2AC - B^2 & AB \\ BC & 2AC - B^2 & 2C^2 & -BC \\ -2AC & AB & -BC & 2AC \end{pmatrix}, \qquad  A_2 = \begin{pmatrix} 0 & 0 & 0 & 1 \\ 0 & 0 & -1 & 0 \\ 0 & -1 & 0 & 0 \\ 1 & 0 & 0 & 0 \end{pmatrix},   \\
A_3 = \begin{pmatrix} 2A & -B \\ -B & 2C \end{pmatrix},
\end{gather*}
so that
\begin{gather*}
Q_1(a,b,c,d) = \frac{1}{2} \begin{pmatrix} a & b & c & d \end{pmatrix} A_1 \begin{pmatrix} a \\ b \\ c \\ d \end{pmatrix}, \quad Q_2(a,b,c,d) = \frac{1}{2} \begin{pmatrix} a & b & c & d \end{pmatrix} A_2 \begin{pmatrix} a \\ b \\ c \\ d \end{pmatrix},    \\
Q_3(a,b) = \frac{1}{2} \begin{pmatrix} a & b \end{pmatrix} A_3 \begin{pmatrix} a \\ b \end{pmatrix}.
\end{gather*}

\begin{lemma}
We have that
\begin{multline}
\label{eqn:quadformCOV}
\sharp\left\{(a,b,c,d) \in \mathbb{Z}^4 : Q_1(a,b,c,d) = n, \ Q_2(a,b,c,d) = \ell\right\}	\\
= \sharp\left\{(a',b',c',d') \in \mathbb{Z}^4 : Q_3(c',d') = A|D|n, \ Q_3(a',b') = A|D|(n + |D|\ell),\right.	\\
\left. Aa' - Bb' - Ac' \equiv -Cb' - Bc' + Cd' \equiv A(b' + d') \equiv A(a' + c') \equiv 0 \hspace{-.25cm} \pmod{A|D|}\right\}.
\end{multline}
\end{lemma}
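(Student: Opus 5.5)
The plan is to construct an explicit injective linear map $\Phi\colon \mathbb{Z}^4 \to \mathbb{Z}^4$, $(a,b,c,d) \mapsto (a',b',c',d')$, that transforms the pair $(Q_1,Q_2)$ into the block-diagonal pair $Q_3(c',d')$ and $Q_3(a',b')$. I would then show that the image of $\Phi$ is exactly the sublattice cut out by the four congruences modulo $A|D|$.

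The map comes from the geometry behind Lemma~\ref{lem:uquadform}. We have
\[u(gz_0,z_0) = \frac{|gz_0 - z_0|^2}{4\Im(gz_0)\Im(z_0)}, \qquad u(gz_0,z_0) + 1 = \frac{|gz_0 - \overline{z_0}|^2}{4\Im(gz_0)\Im(z_0)}.\]
Clearing the common denominator $|cz_0+d|^2$ reduces everything to the two numerators
\[N_1 = -cz_0^2 + (a-d)z_0 + b, \qquad N_2 = a z_0 + b - \overline{z_0}(cz_0+d).\]
Using $Az_0^2 + Bz_0 + C = 0$, $z_0\overline{z_0} = C/A$ and $z_0 + \overline{z_0} = -B/A$, each $AN_j$ becomes an integral combination $x_j z_0 + y_j$. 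Here
\[x_1 = Aa + Bc - Ad, \qquad y_1 = Ab + Cc,\]
and $(x_2,y_2)$ is the analogous pair, with linear forms involving $Aa$, $Ab$, $Cc$, $Ad$ and $Bd$. Since $A|xz_0+y|^2 = Cx^2 - Bxy + Ay^2$, this is $Q_3$ up to the order and sign of the variables. I would therefore set $(c',d')$ to be $(x_1,y_1)$ and $(a',b')$ to be $(x_2,y_2)$, each after a suitable signed swap. This gives polynomial identities of the form
\[Q_3(c',d') = \kappa\, Q_1(a,b,c,d), \qquad Q_3(a',b') = \kappa\,\bigl(Q_1 + |D|Q_2\bigr)(a,b,c,d)\]
for an explicit constant $\kappa$. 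These identities are routine expansions, which I would verify directly against the matrix $A_1$ and against $A_2$ and $A_3$, with the relation $D = B^2 - 4AC$ used at the end. Substituting $Q_1 = n$ and $Q_2 = \ell$ then yields the two quadratic conditions on the right-hand side of \eqref{eqn:quadformCOV}, provided $\kappa = A|D|$. If the natural normalisation produces $A$ rather than $A|D|$, I would rescale $(a',b',c',d')$ by a common integer factor, or otherwise adjust the choice, until the stated constants $A|D|n$ and $A|D|(n+|D|\ell)$ appear.

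It remains to show that $\Phi$ is a bijection onto the congruence sublattice. First, $\det \Phi \neq 0$, since $Q_1$ and $Q_2$ are pulled back from a nondegenerate form; this gives injectivity. Next I would invert $\Phi$ over $\mathbb{Q}$: solve for $a$ and $d$ from the $x$-coordinates and for $b$ and $c$ from the $y$-coordinates, using $D = B^2 - 4AC$. The inverse has denominators dividing $A|D|$. So $(a,b,c,d) = \Phi^{-1}(a',b',c',d')$ is integral precisely when certain integral linear forms in $(a',b',c',d')$ vanish modulo $A|D|$. The expected forms are $Aa' - Bb' - Ac'$, $-Cb' - Bc' + Cd'$, $A(b'+d')$ and $A(a'+c')$, which are the $A|D|$-multiples of $a,b,c,d$ up to sign and relabelling.

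The main obstacle is this last step: pinning down the exact image and checking that these four congruences are both necessary and sufficient for integrality. Here the primitivity $(A,B,C)=1$ must enter, for example to cancel common factors of $A$ and $D$. I would first attempt the cleanest route, namely writing $A|D|\cdot \Phi^{-1}$ as an integer matrix and reading off its rows. If they do not match the stated forms directly, I would row-reduce modulo $A|D|$, using $(A,B,C)=1$, to reach the stated congruences.
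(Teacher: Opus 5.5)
Your geometric construction is correct; it is the paper's change of variables except for one final twist, and that twist is what is missing. The gap is in the step that is supposed to produce the stated normalisation.

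With your $N_1,N_2$ one finds $A^2|N_1|^2 = Q_1$ and $A^2|N_2|^2 = Q_1+|D|Q_2$. Hence $Q_3(y_1,x_1) = A\,Q_1$ and $Q_3(y_2,x_2) = A(Q_1+|D|Q_2)$, so the natural constant is $\kappa = A$. Neither of your proposed fixes can reach $\kappa = A|D|$:
\begin{itemize}
\item Rescaling all coordinates by an integer $\lambda$ multiplies $Q_3$ by $\lambda^2$, and $|D|$ is in general not a square (e.g.\ $D=-3$).
\item Row reduction modulo $A|D|$ cannot help either. Your map $\Phi(a,b,c,d)=(y_2,x_2,y_1,x_1)$ has $|\det\Phi| = A^2|D|$. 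The lattice cut out by the four stated congruences is $S^{-1}\mathbb{Z}^4$, of index $|\det S^{-1}| = A^2|D|^3$ in $\mathbb{Z}^4$. Lattices of different index are never described by equivalent congruence systems.
\end{itemize}
Concretely, with $(a',b',c',d')=(y_2,x_2,y_1,x_1)$ one gets $A|D|a = ABa' + (2AC-B^2)b' - ABc' + 2ACd'$, not $Aa'-Bb'-Ac'$.

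The missing idea is to compose each $2\times 2$ block with multiplication by $\sqrt{D}$ on the lattice attached to $Q_3$, namely
\[M = \begin{pmatrix} B & -2C \\ 2A & -B \end{pmatrix}, \qquad M^2 = D\cdot I, \qquad M^{T}A_3M = |D|\,A_3,\]
so that $Q_3(Mv) = |D|\,Q_3(v)$. The paper's matrix is exactly $S^{-1} = \diag(-M,M)\circ\Phi$. This twist turns $An$ and $A(n+|D|\ell)$ into $A|D|n$ and $A|D|(n+|D|\ell)$, and produces precisely the stated congruences.

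Once you have the integral matrix $S^{-1}$ with $A|D|\,S$ integral, the bijection is immediate: $v\mapsto S^{-1}v$ maps $\mathbb{Z}^4$ onto $\{v'\in\mathbb{Z}^4 : Sv'\in\mathbb{Z}^4\}$. In particular, primitivity of $(A,B,C)$ plays no role, contrary to what you expected.

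Your untwisted identity, with constants $An$, $A(n+|D|\ell)$ and your own congruences modulo $A|D|$, is a true statement. It would serve equally well in the proof of Proposition~\ref{prop:Gammaellubound}, but it is not the lemma as stated.
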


\begin{proof}
The matrix
\[A_2^{-1} A_1 = \begin{pmatrix} -2AC & AB & -BC & 2AC \\ -BC & B^2 - 2AC & -2C^2 & BC \\ AB & -2A^2 & B^2 - 2AC & -AB \\ 2AC & -AB & BC & -2AC \end{pmatrix}\]
is diagonalisable, namely $A_2^{-1} A_1 = S \diag(0,0,D,D) S^{-1}$ with
\begin{align*}
S & = \frac{1}{A|D|} \begin{pmatrix} A & -B & -A & 0 \\ 0 & -C & -B & C \\ 0 & A & 0 & A \\ A & 0 & A & 0 \end{pmatrix},    \\
S^{-1} & = \begin{pmatrix} 2AC & -AB & BC & 2AC - B^2 \\ AB & -2A^2 & 2AC & -AB \\ -2AC & AB & -BC & 2AC \\ -AB & 2A^2 & 2AC - B^2 & AB \end{pmatrix}.
\end{align*}
Moreover, we have that
\begin{align*}
A_1' & = S^T A_1 S = \frac{1}{A|D|} \begin{pmatrix} 0 & 0 & 0 & 0 \\ 0 & 0 & 0 & 0 \\ 0 & 0 & 2A & -B \\ 0 & 0 & -B & 2C \end{pmatrix},	\\
A_2' & = S^T A_2 S = \frac{1}{A|D|^2} \begin{pmatrix} 2A & -B & 0 & 0 \\ -B & 2C & 0 & 0 \\ 0 & 0 & -2A & B \\ 0 & 0 & B & -2C \end{pmatrix}.
\end{align*}
With this in mind, we let
\[\begin{pmatrix} a' \\ b' \\ c' \\ d' \end{pmatrix} = S^{-1} \begin{pmatrix} a \\ b \\ c  \\ d \end{pmatrix} = \begin{pmatrix} 2ACa - ABb + BCc + (2AC - B^2)d \\ A(Ba - 2Ab + 2Cc - Bd) \\ -2ACa + ABb - BCc + 2ACd \\ -ABa + 2A^2 b + (2AC - B^2)c + ABd \end{pmatrix},\]
so that
\[\begin{pmatrix} a \\ b \\ c \\ d \end{pmatrix} = S \begin{pmatrix} a' \\ b' \\ c'  \\ d' \end{pmatrix} = \frac{1}{A|D|} \begin{pmatrix} Aa' - Bb' - Ac' \\ -Cb' - Bc' + Cd' \\ A(b' + d') \\ A(a' + c') \end{pmatrix}.\]
From this, we see that the condition $(a,b,c,d) \in \mathbb{Z}^4$ is equivalent to $(a',b',c',d') \in \mathbb{Z}^4$ satisfying
\[Aa' - Bb' - Ac' \equiv -Cb' - Bc' + Cd' \equiv A(b' + d') \equiv A(a' + c') \equiv 0 \hspace{-.25cm} \pmod{A|D|}.\]
Moreover, we have that
\begin{align*}
Q_1(a,b,c,d) & = \frac{1}{2} \begin{pmatrix} a' & b' & c' & d' \end{pmatrix} A_1' \begin{pmatrix} a' \\ b' \\ c' \\ d' \end{pmatrix} = \frac{1}{A|D|} Q_3(c',d'),	\\
Q_2(a,b,c,d) & = \frac{1}{2} \begin{pmatrix} a' & b' & c' & d' \end{pmatrix} A_2' \begin{pmatrix} a' \\ b' \\ c' \\ d' \end{pmatrix}
= \frac{1}{A|D|^2} \left(Q_3(a',b') - Q_3(c',d')\right),
\end{align*}
so that the conditions $Q_1(a,b,c,d) = n$ and $Q_2(a,b,c,d) = \ell$ are equivalent to $Q_3(c',d') = A|D|n$ and $Q_3(a',b') = A|D|(n + |D|\ell)$.
\end{proof}

\begin{rem}
A consequence of this result is that for $\delta < \frac{1}{|D|\ell}$,
\[\sharp\{\gamma \in \Gamma_{\ell} : u(\gamma z_0,z_0) \leq \delta\} = \sharp\left\{(a',b') \in \mathbb{Z}^2 : {a'}^2 - Ba'b' + AC{b'}^2 = \ell\right\}.\]
This can be seen by noting that if $n = 0$, then necessarily $c' = d' = 0$ in \eqref{eqn:quadformCOV}, so that the congruence constraints on $a'$ and $b'$ are simply $a' \equiv 0 \pmod{|D|}$ and $b' \equiv 0 \pmod{A|D|}$, at which point we may make the change of variables $a' \mapsto |D|a'$ and $b' \mapsto A|D|b'$. This recovers a special case of \cite[Lemma~2]{Mi}.
\end{rem}

\begin{proof}[Proof of Proposition~\ref{prop:Gammaellubound}]
We have that
\begin{multline*}
\sharp\{\gamma \in \Gamma_{\ell} : u(\gamma z_0,z_0) \leq \delta\} \leq \sum_{0 \leq n \leq |D|\ell \delta} \sharp\left\{(c',d') \in \mathbb{Z}^2 : Q_3(c',d') = A|D|n\right\}   \\
\times \sharp\left\{(a',b) \in \mathbb{Z}^2 : Q_3(a',b') = A|D|(n + |D|\ell)\right\}.
\end{multline*}
Since $Q_3$ is positive definite, this in turn is
\[\ll_{D,\epsilon} \sum_{0 \leq n \leq |D|\ell \delta} (n + 1)^{\epsilon} (n + |D|\ell)^{\epsilon},\]
which yields the desired result.
\end{proof}

\begin{rem}
\label{rem:HK}
An alternative yet lengthier approach to proving \eqref{eq:enhan_density} is to use Waldspurger's formula in order to reduce the problem to showing that for each $\xi\in \widehat{\mathrm{Cl}_D}$,
\begin{multline}
\label{eqn:Lfunctiontwistedsecondmoment}
\sum_{\substack{\pi \textrm{ cusp.\ aut.}\\ c(\pi) = 1}} \lambda_{\pi}(\ell) \cdot \frac{L(\frac{1}{2},\pi\otimes \sigma_{\xi})}{L(1,\pi,\mathrm{ad})} e^{-\frac{t_{\pi}^2}{T^2}}   \\
+ \frac{1}{2\pi} \int_{-\infty}^{\infty} \lambda(\ell,t) \cdot \frac{L(\frac{1}{2} + it,\sigma_{\xi}) L(\frac{1}{2} - it,\sigma_{\xi})}{\zeta(1 + 2it)\zeta(1 - 2it)} e^{-\frac{t^2}{T^2}} \, dt \ll_{D,\epsilon} (\ell T)^{\epsilon} \left(\frac{T^2}{\sqrt{\ell}} + \sqrt{\ell}\right).
\end{multline}
For $\ell = 1$, this follows with minor modifications from \cite[Proposition~6.1]{HK}, and the same method remains valid for arbitrary positive integers $\ell$. The proof is via a series of manoeuvres that are relatively standard in the analytic theory of automorphic forms, namely:
\begin{enumerate}
\item Use the approximate functional equation to replace both $L(\frac{1}{2},\pi\otimes\sigma_{\xi})$ and $L(\frac{1}{2} + it,\sigma_{\xi})L(\frac{1}{2} - it,\sigma_{\xi})$ with Dirichlet polynomials, up to a negligibly small error term;
\item Interchange the order of summation, then apply the Kuznetsov formula to express the resulting sum of Fourier coefficients of automorphic forms in terms of a sum of Kloosterman sums;
\item Open up the Kloosterman sums and interchange the order of summation;
\item Apply the Vorono\u{\i} summation formula and interchange the order of summation once more.
\end{enumerate}
At this point, the resulting expression can be identified as a shifted convolution sum involving the Hecke eigenvalues of $\sigma_{\xi}$ weighted by a test function. One can bound this test function using stationary phase techniques and then trivially bound the ensuing shifted convolution sum. This yields the desired bound \eqref{eqn:Lfunctiontwistedsecondmoment}.
\end{rem}


\end{document}